\documentclass[11pt,reqno]{amsart}
\usepackage[utf8]{inputenc}
\usepackage{amsmath}
\usepackage{amsfonts}
\usepackage{amssymb}
\usepackage{amsthm}
\usepackage{appendix}
\usepackage{indentfirst}
\usepackage{mathtools}
\mathtoolsset{showonlyrefs=true}
\usepackage{enumitem}
\usepackage{hyperref}
\usepackage{geometry}

\hypersetup{
colorlinks=true,
linkcolor=blue,
citecolor=blue,
}

\title[Expansive solutions of the $N$-body problem]{Expansive solutions with prescribed asymptotics of the classical $N$-body problem}
\author{Yutong Wu}
\address{Department of Mathematics, Yale University, New Haven, CT 06511} 
\email{yutong.wu.yw894@yale.edu}

\subjclass[2020]{Primary: 70F10. Secondary: 70F15, 34E05}
	
\keywords{$N$-body problem, asymptotic expansion, modified wave operator}

\def \N{\mathbb{N}}
\def \R{\mathbb{R}}
\def \C{\mathbb{C}}
\def \Z{\mathbb{Z}}
\def \d{\mathrm{d}}
\def \dt{\mathrm{d}t}
\def \X{\mathcal{X}}
\def \Y{\mathcal{Y}}
\def \B{\mathcal{B}}
\DeclareMathOperator{\im}{Im}
\renewcommand{\Re}{\mathrm{Re}}

\begin{document}

\numberwithin{equation}{section} 

\newtheorem{thm}{Theorem}
\newtheorem{lem}{Lemma}[section]
\newtheorem{prop}{Proposition}
\newtheorem*{defn}{Definition}
\newtheorem{rmk}{Remark}[section]

\begin{abstract}
We consider the classical $N$-body problem with the $\frac{1}{|x|^p}$-type potential, where $p>0$. We construct hyperbolic, parabolic and hyperbolic-parabolic solutions with prescribed asymptotic data as $t \to+\infty$.
\end{abstract}

\maketitle

\section{Introduction}

The $N$-body problem is a fundamental model in classical mechanics describing the motion of $N$ interacting point particles in $\R^d$. In this paper, we fix the dimension $d \in \N_+$, the number of bodies $N \in \N_{>1}$, the masses of the bodies $\lambda_1, \cdots, \lambda_N>0$, and a parameter $p>0$ indicating the strength of the potential. We study
\begin{equation} \label{eq N-body}
    \dot{\alpha}_i= \beta_i, \quad \lambda_i \dot{\beta}_i= -\sum_{j \neq i} \frac{\lambda_i \lambda_j(\alpha_i- \alpha_j)}{|\alpha_i- \alpha_j|^{p+2}}, \quad \forall i=1,2, \cdots, N,
\end{equation}
where $\alpha_i$'s and $\beta_i$'s take values in $\R^d$. Here, $\alpha_i$ represents the position and $\beta_i$ represents the velocity of the $i$-th body. 

We can write \eqref{eq N-body} in a more compact form. Let 
\begin{equation}
    \alpha= (\alpha_1, \cdots, \alpha_N), \;\; \beta= (\beta_1, \cdots, \beta_N), \;\; \text{and} \;\; P=(\alpha, \beta).
\end{equation} 
Set
\begin{equation}
    \X= \R^{dN}, \quad \Y= \left\{ \alpha \in \X \ \big| \ \alpha_j \neq \alpha_k, \ \forall j \neq k \right\}, \quad \Delta= \X \setminus \Y.
\end{equation}
For $\alpha \in \X$, define
\begin{equation}
    U(\alpha):= \frac{1}{p} \sum_{i<j} \frac{\lambda_i \lambda_j}{|\alpha_i- \alpha_j|^p} \quad \text{and} \quad |\alpha|^2 _{\lambda}:= \sum_{i=1}^N \lambda_i |\alpha_i|^2.
\end{equation}
We see $U(\alpha)= +\infty \iff \alpha \in \Delta$. Besides, $| \cdot |^2_{\lambda}$ induces a metric on $\X$. Let $\nabla$ denote the gradient with respect to this metric. In other words, $\nabla_i= \frac{1}{\lambda_i} \partial_{\alpha_i}$. Then \eqref{eq N-body} is equivalent to 
\begin{equation} \label{eq N-body tight form}
    \ddot{\alpha}= \nabla U(\alpha).
\end{equation}

By translation invariance, the center of mass can be assumed to be the origin: $\sum_i \lambda_i \alpha_i=0$. This is a common assumption but is unnecessary for us and will not be assumed.

The complexity of the $N$-body problem has been known since  Poincar\'e's work \cite{Poincare3body} on the $3$-body problem in the 19th century. While obtaining the precise form of the solution is impossible, a key question is how to classify solutions by their behavior as $t \to +\infty$. We will be interested in \textbf{expansive solutions}, namely those for which all mutual distances diverge to infinity:
\begin{equation}
    |\alpha_i(t)- \alpha_j(t)| \to +\infty \;\; \text{as} \;\; t \to +\infty, \;\; \forall i\neq j.
\end{equation}

Chazy's pioneering work \cite{ChazyClassification} studied the final motions of the $3$-body problem and proposed the concept of \emph{hyperbolic}, \emph{parabolic}, and \emph{hyperbolic-parabolic} solutions, which were extended to the $N$-body case later by Saari \cite{ExpansiveNbody} and Marchal-Saari \cite{StructureOfExpansive}. These early papers focus mainly on the Newtonian case $p=1$, for which \eqref{eq N-body} describes the motion of $N$ bodies subject to gravitational force, yet some results are generalizable to more general $p>0$.

The classification for any $p>0$ is as follows:
\begin{itemize}
    \item \textbf{hyperbolic:} $|\alpha_i- \alpha_j| \sim t$ \footnote{For nonnegative functions $f$ and $g$, we write $f \sim g$ if there exist $c,C>0$ such that $cf \le g \le Cf$.} as $t \to +\infty$ for each $i \neq j$;
    \item \textbf{parabolic:} $|\alpha_i- \alpha_j| \sim t^{2/(p+2)}$ as $t \to +\infty$ for each $i \neq j$;
    \item \textbf{hyperbolic-parabolic:} for each $i \neq j$, either $|\alpha_i- \alpha_j| \sim t$ or $|\alpha_i- \alpha_j| \sim t^{2/(p+2)}$ as $t \to +\infty$,  and both regimes occur. 
\end{itemize}

For hyperbolic-parabolic solutions, one can divide all bodies into \emph{clusters} so that different clusters depart from each other at the hyperbolic rate $t$, and bodies within the same cluster diverge at the parabolic rate $t^{2/(p+2)}$. 

From a modern viewpoint, large-time classification is closely related to problems in scattering theory: 
\begin{enumerate}
    \item Can one parameterize expansive solutions by asymptotic data as $t \to +\infty$ such as limiting velocities and configurations?
    \item Can one construct solutions from asymptotic data as $t \to +\infty$?
\end{enumerate}

Question (1) was studied in \cite{LongtimeScatter, ScatterClassical} in the context of scattering theory for hyperbolic dynamics, and in \cite{ParaManifold} in the language of the stable manifold of parabolic orbits. Question (2) is the main focus of this paper. Chazy \cite{ChazyClassification} partially studied this question and a systematic introduction to scattering for the $N$-body problem can be found in the monograph \cite{ScatteringBook}. \vspace{8pt}

\noindent
\textbf{Main results:}

We will construct hyperbolic, parabolic, and hyperbolic-parabolic solutions with prescribed asymptotic data as $t \to +\infty$, thereby answering Question (2) above. In particular, we will give the form of asymptotic expansions up to an $o(1)$ error. 

The map that sends the limiting velocity $v$ and the asymptotic offset $x$ to the actual solution of the form $\alpha(t)= x+ vt+ o(1)$ is called a \emph{wave operator}. If the potential is long-range, meaning $p \in (0,1]$ in our setting, then the solution is not asymptotically free, and we call the map a \emph{modified wave operator} since the asymptotics require correction terms beyond $x+vt$. Our Theorem \ref{thm hyperbolic} will construct modified wave operators for hyperbolic solutions. In the parabolic case and the hyperbolic-parabolic case, prescribed data are different. We will prescribe the central configuration and sub-leading modes in the parabolic case, and inter-cluster velocities as well as intra-cluster parabolic motion in the hyperbolic-parabolic case. More precise statements will be given as we state Theorem \ref{thm parabolic} and Theorem \ref{thm mixed}. \vspace{8pt}

\noindent
\textbf{Related works:}

Many existing constructions of expansive solutions prescribe not only asymptotic data as $t \to +\infty$ but also an initial configuration or asymptotic behavior at negative infinity. A common approach is to use variational methods, typically in a weak KAM framework; see \cite{HyperfromInitial, ParafromInitial,ParaEntire, PartiallyHyperbolic, ExistenceofNbodyproblem} for example. Other works emphasize geometric methods \cite{GeneralHyperbolic} or stable and unstable manifolds at infinity \cite{Hyperbothdirection}. We refer the reader to references therein.

In contrast, our results prescribe asymptotic data solely at $t= +\infty$ and track sub-leading terms in the resulting asymptotic expansions. The method we use will be a perturbative approach based on a fixed-point argument.

Existing constructions of this type focus mainly on the hyperbolic case. In \cite[Theorem~5.3]{ScatteringHyper}, the authors constructed hyperbolic solutions via the Dollard--M{\o}ller transform, for which a standard reference in the quantum setting is \cite{Quantumlongrange} by Dollard. The  proof of their Theorem~5.3 was provided in detail for $p>1/2$, with an indication that this restriction can be removed using arguments in \cite{ScatteringHyperArgument}. Their Theorem~5.3 is close in spirit to our Theorem \ref{thm hyperbolic} below, while our result gives the explicit form of the remainders and the uniqueness of the solution for all $p>0$. For parabolic and hyperbolic-parabolic dynamics, our Theorem \ref{thm parabolic} and Theorem \ref{thm mixed} complement existing approaches and results by constructing solutions from prescribed data at $+\infty$ with sub-leading terms using the fixed-point argument. \vspace{8pt}

\noindent
\textbf{Case 1: hyperbolic.}

\begin{thm}[hyperbolic solutions] \label{thm hyperbolic}
Take $n \in \N$ such that $\frac{1}{n+1} \le p < \frac{1}{n}$. Let $x \in \X$ and $v \in \Y$.

(i) If $p>\frac{1}{n+1}$, then there exist $c^1, \cdots, c^n \in \X$ independent of $x$ and $T_0>0$ such that \eqref{eq N-body} has a unique solution $P(t)$ for $t > T_0$ with
\begin{equation} \label{eq asymptotic hyperbolic} \begin{aligned} 
    \alpha(t) &= x+ vt+ \sum_{k=1}^n c^k t^{1-kp}+ O \big( t^{1-(n+1)p} \big) \\
    \beta(t) &= v+ \sum_{k=1}^n (1-kp) c^k t^{-kp}+ O \big( t^{-(n+1)p} \big) \end{aligned} \quad \text{as} \quad t \to +\infty.
\end{equation}

(ii) If $p=\frac{1}{n+1}$, then there exist $c^1, \cdots, c^{n+1} \in \X$ independent of $x$ and $T_0>0$ such that \eqref{eq N-body} has a unique solution $P(t)$ for $t > T_0$ with
\begin{equation} \begin{aligned} 
    \alpha(t) &= x+ vt+ \sum_{k=1}^n c^k t^{1-kp}+ c^{n+1} \log t+ O \big( t^{1-(n+2)p} \log t \big) \\
    \beta(t) &= v+ \sum_{k=1}^n (1-kp) c^k t^{-kp}+ \frac{c^{n+1}}{t}+ O \big( t^{-(n+2)p} \log t \big) \end{aligned} \quad \text{as} \quad t \to +\infty.
\end{equation}
\end{thm}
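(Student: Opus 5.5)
The plan is to build an explicit approximate solution by formal expansion, and then find the true solution near it with a contraction mapping on an integral equation posed at $t=+\infty$.

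\textbf{Step 1: approximate solution.} Write $\alpha(t)=vt+x+w(t)$. Since $v\in\Y$, the differences satisfy $|v_i-v_j|>0$. Hence for $\alpha$ near $vt$ we can Taylor expand
\begin{equation}
\nabla U(vt+y)=t^{-p-1}\nabla U(v+y/t),
\end{equation}
using the homogeneity of $\nabla U$ of degree $-(p+1)$. We make the ansatz $w_{\mathrm{app}}=\sum_{k=1}^n c^k t^{1-kp}$. Plug it into $\ddot w=\nabla U(vt+x+w)$ and expand $\nabla U(v+(x+w)/t)$ in powers of $(x+w)/t$. The term $x/t$ only produces corrections of order $t^{-p-2}$, which are integrable twice with room to spare; this is why the $c^k$ do not depend on $x$.

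Matching powers $t^{-1-kp}$ gives a recursion. The left side has coefficient $(1-kp)(-kp)c^k$, and the right side is a polynomial in $c^1,\dots,c^{k-1}$ built from derivatives of $\nabla U$ at $v$. For instance,
\begin{equation}
-p(1-p)c^1=\nabla U(v),
\end{equation}
and in general $(1-kp)(-kp)c^k=F_k(v;c^1,\dots,c^{k-1})$. This is solvable provided $kp\neq 1$. That holds for $k\le n$ in case (i). In case (ii) the resonant power $t^{-1-(n+1)p}=t^{-2}$ forces a $\log t$ term: $c^{n+1}\log t$ has second derivative $-c^{n+1}t^{-2}$, which fixes $c^{n+1}=-F_{n+1}$.

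With these choices the residual satisfies
\begin{equation}
R:=\ddot\alpha_{\mathrm{app}}-\nabla U(\alpha_{\mathrm{app}})=O\big(t^{-1-(n+1)p}\big)+O(t^{-p-2})
\end{equation}
in case (i). In case (ii) the bound is $O(t^{-1-(n+2)p}\log t)$. Since $p\le 1$, the $t^{-p-2}$ term is dominated.

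\textbf{Step 2: fixed point.} Set $\alpha=\alpha_{\mathrm{app}}+r$ with $r,\dot r\to0$. Solving from infinity gives
\begin{equation}
r(t)=\int_t^\infty (s-t)\big[\nabla U(\alpha_{\mathrm{app}}(s)+r(s))-\nabla U(\alpha_{\mathrm{app}}(s))-R(s)\big]\,ds.
\end{equation}
Work in the Banach space of $r$ with norm $\sup_{t>T_0} t^{-\gamma}|r|+t^{1-\gamma}|\dot r|$, where $\gamma=1-(n+1)p<0$ in case (i); use the corresponding log weight in case (ii). Because $\alpha_{\mathrm{app}}\approx vt$ with $v\in\Y$, the Hessian of $U$ along the trajectory is $O(s^{-p-2})$ for $T_0$ large. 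Therefore the map $\Phi$ satisfies
\begin{equation}
|\Phi r_1-\Phi r_2|(t)\lesssim \int_t^\infty (s-t)\,s^{-p-2}s^{\gamma}\,ds\,\|r_1-r_2\|\lesssim t^{\gamma-p}\|r_1-r_2\|.
\end{equation}
This gains the factor $T_0^{-p}$, so $\Phi$ is a contraction for $T_0$ large. The inhomogeneous term $\int(s-t)R\,ds$ is $O(t^{1-(n+1)p})$; it converges because $-1-(n+1)p<-1$ and the weighted integrals are finite. The derivative estimate follows by differentiating under the integral.

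\textbf{Uniqueness} then reduces to showing that any solution with the stated asymptotics gives an $r$ in this space. This holds because the stated error $O(t^{1-(n+1)p})$ is exactly the weight. Any such $r$ is a fixed point, since the integral representation follows from $r,\dot r\to 0$.

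\textbf{Main obstacle.} I expect the main obstacle to be the bookkeeping in Step 1. One must show that the Taylor expansion produces only the powers $t^{-1-kp}$ plus harmless higher-order terms. One must also check that cross terms involving $x$ and the resonant logarithm do not spoil the residual order. In particular, in case (ii) the $\log t$ term multiplied by derivatives of $\nabla U$ creates terms of size $t^{-2-p}\log t$, which must be absorbed into the error $t^{-1-(n+2)p}\log t$. This works since $(n+2)p=1+p$.
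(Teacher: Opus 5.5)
Your proposal is correct and follows essentially the paper's route. You expand $\nabla U(vt+x+w)=t^{-p-1}\nabla U(v+(x+w)/t)$ in powers of $t^{-p}$, with the $\log t$ term absorbing the resonant $t^{-2}$ coefficient when $p=\frac{1}{n+1}$. You then run a contraction posed at $t=+\infty$, and get uniqueness because any solution with the stated asymptotics lies in the weighted ball. Using the second-order Duhamel formula $r(t)=\int_t^\infty (s-t)[\cdots]\,ds$ instead of the paper's first-order map on $(\alpha,\beta)$ with the $\epsilon$-weighted norm is only cosmetic.

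One small correction: the $O(t^{-p-2})$ contribution of $x$ is dominated by $t^{-1-(n+1)p}$ because $-p-2\le -1-(n+1)p \iff np\le 1$. This holds by the choice of $n$, not merely because $p\le 1$.
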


In the short-range case $p>1$, it is well-known that hyperbolic solutions are free: $\alpha(t)=x+vt+o(1)$, which is covered by our result. We also recover the Newtonian case $p=1$ where one knows $\alpha(t)= x+ vt+ c\log t+ o(1)$ with $c$ only depending on $v$. If $\frac{1}{2}<p<1$, then we need one more additional term, which was explicitly studied in \cite{ScatteringHyper}. Our result achieves its full strength when $p>0$ is small. \vspace{8pt}

\noindent
\textbf{Case 2: parabolic.}

We say $a \in \Y$ is \textbf{minimal} if for any $\tilde{a} \in \Y$,
\begin{equation}
    |\tilde{a}|_{\lambda}= |a|_{\lambda} \implies U(\tilde{a}) \ge U(a).
\end{equation}
We say $a \in \Y$ is \textbf{normal}\footnote{In some literature, people say $a$ is a \emph{minimal normalized central configuration} if $a$ is minimal as defined above, and $|a|_{\lambda}=1$, $\sum_i \lambda_i a_i=0$. Here, ``normalized'' refers to the condition $|a|_{\lambda}=1$. However, in this paper, it would be more convenient to normalize $a$ so that $at^{2/(p+2)}$ solves \eqref{eq N-body}, so we adopt the above definition of ``normal''.} if $a$ is minimal and $P(t)$ defined by
\begin{equation}
    \alpha(t)= a t^{\frac{2}{p+2}}, \quad \beta(t)= \dot{\alpha}(t)
\end{equation}
is a solution of \eqref{eq N-body} for $t>0$. 

\begin{rmk}
If $a \in \Y$ is minimal, then there exists a unique $h>0$ such that $ha$ is normal. This can be deduced from the method of Lagrange multipliers.
\end{rmk}

For $\gamma \in \R$, define the following space of functions in $t$ on $(0,+\infty)$:
\begin{equation}
    S_{\gamma}:= \left\{ \sum_{i=1}^m y_i t^{q_i} \log^{k_i} (t) \ \Big| \ m \in \N,\ y_i \in \X,\ q_i \le \gamma,\ k_i \in \N \right\}.
\end{equation}

\begin{thm}[parabolic solutions] \label{thm parabolic} Assume $p \in (0,2)$. Let $a \in \Y$ be normal. Then there exists a basis $\{b^1, \cdots, b^{dN}\}$ of $\X$ and $p_1, \cdots, p_{dN} \le \frac{p}{p+2}$ satisfying: for any $x=(x_1, \cdots, x_{dN}) \in \X$, there exists $\delta>0$, $T_0>0$, a solution $P_x(t)$ of \eqref{eq N-body} for $t>T_0$ and $f \in S_{\frac{2p-2}{p+2}}$ such that 
\begin{equation} \label{eq asymptotic parabolic}
    \alpha_x(t) = at^{\frac{2}{p+2}}+ \sum_{k=1}^{dN} x_k b^k t^{p_k}+ f(t)+ o(t^{-\delta+})\footnote{For a function $g$ of $t$ and $\mu \in \R$, we write $g(t)=o(t^{\mu+})$ as $t \to +\infty$ if $\lim\limits_{t \to +\infty} \frac{g(t)} {t^{\mu+ \epsilon}}= 0$ for all $\epsilon>0$.} \quad \text{as} \quad t \to +\infty.
\end{equation}
Moreover, for a fixed $a$, the map $x \mapsto P_x$ is injective.

In particular, if $p \in (0,1)$, we have
\begin{equation} \label{eq asymptotic parabolic p<1}
    \alpha_x(t) = at^{\frac{2}{p+2}}+ \sum_{k=1}^{dN} x_k b^k t^{p_k}+ o \big( t^{-\frac{2-2p}{p+2}+} \big) \quad \text{as} \quad t \to +\infty.
\end{equation}
\end{thm}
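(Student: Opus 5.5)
We look for $\alpha(t)=at^{\frac{2}{p+2}}+u(t)$ and linearize \eqref{eq N-body tight form} around the homothetic solution. Since $\nabla U$ is homogeneous of degree $-(p+1)$,
\begin{equation}
\nabla U\big(at^{\frac{2}{p+2}}+u\big)=\nabla U(a)\,t^{-\frac{2p+2}{p+2}}+t^{-2}\,D^2U(a)\big[t^{-\frac{2}{p+2}}u\big]\,t^{\frac{2}{p+2}}+\mathcal N(t,u),
\end{equation}
where $\mathcal N$ is quadratic in $u$ and carries weight $t^{-\frac{2p+2}{p+2}}|t^{-\frac{2}{p+2}}u|^2$.

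\textbf{Step 1: the linear equation.} The normality of $a$ means $\nabla U(a)=\frac{2}{p+2}\cdot\frac{-p}{p+2}a$, which cancels the leading term. The linear equation is then the Euler-type equation
\begin{equation}
\ddot u=t^{-2}Au,\qquad A:=D^2U(a)\ (\text{self-adjoint in }|\cdot|_\lambda).
\end{equation}
Diagonalize $A$ in a $\lambda$-orthonormal eigenbasis $b^k$ with eigenvalues $\mu_k$. Solutions $t^{q}b^k$ satisfy $q(q-1)=\mu_k$, so the two roots are $q_\pm=\frac12\pm\sqrt{\frac14+\mu_k}$.

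Minimality of $a$ gives $D^2U(a)\ge$ the Lagrange-multiplier term on the tangent space of the sphere. This should yield $\mu_k\ge-\frac14$ on $a^\perp$ except in the radial direction; indeed $Aa=\frac{-p(p+2)}{(p+2)^2}\cdot(-(p+1))a$ by homogeneity. We choose $p_k$ to be the smaller root $q_-$, or possibly the root corresponding to time-translation (which gives exponent $\frac{2}{p+2}-1=-\frac{p}{p+2}$ in direction $a$). We must verify that every $p_k\le\frac{p}{p+2}$. This is where the Morse-index and minimality inequality is used: the stability condition $\mu_k\ge\ldots$ should force $q_-\le\frac{p}{p+2}$.

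Translations (eigenvalue $0$, roots $0,1$) and rotations also need care. If $\mu_k<-\frac14$, the roots are complex. Minimality should exclude this, and this is the main obstacle; we would handle it via the second-variation inequality $\langle Au,u\rangle\ge \frac{p}{p+2}\cdot\frac{2}{p+2}\cdot c\,|u|^2$ for $u\perp a$. Repeated roots produce the $\log^k$ factors that appear in $S_\gamma$.

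\textbf{Step 2: formal expansion.} We set $u_0=\sum_k x_kb^kt^{p_k}$ and iterate: plug in, expand $\mathcal N$ in Taylor series (which is valid because $u_0=o(t^{\frac{2}{p+2}})$), and solve $\ddot w-t^{-2}Aw=y\,t^{q}\log^jt$ exactly by $w\in S_{q+2}$. Resonances with the indicial roots add powers of $\log$. Quadratic terms have exponent at most $2p_k-\frac{2}{p+2}-\frac{2p+2}{p+2}+2\le\frac{2p-2}{p+2}$, which is negative since $p<1$ is not required, only $p<2$. Hence after finitely many iterations the residual is $O(t^{-2-\delta'})$, and we define $f\in S_{\frac{2p-2}{p+2}}$ as the sum of the corrections. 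When $p<1$ the exponent $\frac{2p-2}{p+2}$ is negative, and $f$ itself is $o(t^{-\frac{2-2p}{p+2}+})$, which gives \eqref{eq asymptotic parabolic p<1}.

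\textbf{Step 3: fixed point.} We write $\alpha=at^{\frac{2}{p+2}}+u_0+f+r$ and solve for $r$ by a contraction mapping in the norm $\sup_{t>T_0}t^{\delta}(|r|+t|\dot r|)$. We invert $\ddot r-t^{-2}Ar=g$ using the variation-of-constants formula integrated from $+\infty$, mode by mode:
\begin{equation}
r_k(t)=-\int_t^\infty\frac{t^{q_+}s^{q_-}-t^{q_-}s^{q_+}}{q_+-q_-}\,s^{-1}g_k(s)\,ds .
\end{equation}
This is bounded by $t^{2-\eta}$ whenever $g=O(t^{-\eta})$ with $\eta>2+\max(q_+)-\ldots$. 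We pick $\delta>0$ smaller than the gap to the exponents and take enough iterations in Step 2 that the residual decays fast enough. The nonlinear remainder is Lipschitz with small constant for $T_0$ large. Uniqueness within this class of $r$ is part of the contraction statement.

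\textbf{Step 4: injectivity.} If $P_x=P_{x'}$, then subtracting the expansions gives $\sum_k(x_k-x'_k)b^kt^{p_k}+(f-f')=o(t^{-\delta+})$. The terms of $f-f'$ are generated at exponents that either differ from the $p_k$ or sit at lower order. Comparing leading exponents inductively in decreasing order of $p_k$, using the linear independence of the $b^k$ and of the functions $t^q\log^jt$, forces $x=x'$.
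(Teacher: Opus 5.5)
Your global scheme (all modes at once, then one contraction for the remainder) differs from the paper's nested construction, but as written Steps 3 and 4 break down on the fast-decaying modes.

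\textbf{Steps 3 and 4.} With a small weight $\delta$, the source in Step 3 contains $(\nabla^2U(\bar\alpha)-t^{-2}A)r$, where $\bar\alpha=at^{\frac{2}{p+2}}+u_0+f$. This term is only $O(t^{-2-\kappa-\delta})$ with $\kappa=\frac{2}{p+2}-\max_kp_k$, however far you push Step 2. So the piece $t^{q_-}\int_t^\infty s^{q_+}g(s)\,\d s$ of your Green operator need not converge for a mode with $q_+=1-p_k\ge 1+\kappa+\delta$, and you have no a priori lower bound on the $p_k$. (The kernel also has no factor $s^{-1}$: the Wronskian of $t^{q_\pm}$ is the constant $q_+-q_-$.)

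The same modes defeat Step 4. A term $x_kb^kt^{p_k}$ with $p_k\le-\delta$ is swallowed by the $o(t^{-\delta+})$ remainder, so comparing expansions cannot recover $x_k$. Already the eigenvector $a$ (time translation, eigenvalue $\frac{2p(p+1)}{(p+2)^2}$) has $p_k=-\frac{p}{p+2}$, which is invisible once $\delta<\frac{p}{p+2}$. This is why the paper remarks that injectivity does not follow from \eqref{eq asymptotic parabolic}. The $p<1$ statement also needs $\delta\ge\frac{2-2p}{p+2}$.

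The paper avoids this in two ways. First, Lemma~\ref{lem ODE} integrates the $t^{q_-}$-component from $T$ instead of $+\infty$ whenever $q_-\le-\delta$. Second, it builds exact solutions $P^1,\dots,P^{dN}$ one mode at a time in decreasing order of $p_k$, with $\alpha^k-\alpha^{k-1}=x_kb^kt^{p_k}+o(t^{p_k})$, an error \emph{relative} to $t^{p_k}$. At the first index $l$ with $x_l\neq y_l$ one has $P^{l-1}_x=P^{l-1}_y$, and $x_l=y_l$ is read off at order $t^{p_l}$.

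Alternatively, you can rescue your scheme by taking the weight \emph{large}, $\delta>-\min_kp_k$, and running Step 2 until the residual is $O(t^{-2-\delta'})$ with $\delta'>\delta$. Then all integrals from $+\infty$ converge, the contraction constant is $O(T_0^{-\kappa})$, and every mode lies above the remainder. You would still need that the coefficients of $f$ at exponents $\ge p_l$ depend only on $\{x_i: p_i>p_l\}$. This holds because the linearization about the homothetic solution is exactly $t^{-2}A$, and an $s$-fold product of factors with exponents $e_1\ge\dots\ge e_s$ yields, after inversion, exponent $\sum_i e_i-(s-1)\frac{2}{p+2}\le e_1-(s-1)\frac{2-p}{p+2}$, provided resonant solves follow a fixed rule. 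Your claim that the exponents of $f$ ``differ from the $p_k$'' is false in general: resonant corrections land exactly on $t^{p_l}\log^j t$.

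\textbf{Steps 1 and 2.} Two other ingredients are only asserted. In Step 1, $\mu\ge-\frac14$ gives real roots with $q_-\le\frac12$, but $p_k\le\frac{p}{p+2}$ needs every eigenvalue of $A=\Lambda^{-1}D^2U(a)$ (the $\lambda$-self-adjoint Hessian, not $D^2U(a)$) to satisfy $\mu\ge-\frac{2p}{(p+2)^2}$. Then $\frac14+\mu\ge\frac{(2-p)^2}{4(p+2)^2}$, which gives the bound. Your Lagrange-multiplier idea yields exactly this. The multiplier is $-\frac{2p}{(p+2)^2}$ because $\nabla U(a)=-\frac{2p}{(p+2)^2}a$. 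So second-order minimality on the sphere $|\cdot|_\lambda=|a|_\lambda$ gives $(Au,u)_\lambda\ge-\frac{2p}{(p+2)^2}|u|_\lambda^2$ on $a^{\perp_\lambda}$. That subspace is $A$-invariant since $Aa=-(p+1)\nabla U(a)=\frac{2p(p+1)}{(p+2)^2}a$ (your coefficient is off). Lemma~\ref{lem linear algebra} gets the same bound from $D^2(U|\cdot|_\lambda^p)(a)\ge0$.

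In Step 2, ``after finitely many iterations'' is the substance of Lemma~\ref{lem claim} and needs an argument. In your setting every generated exponent has the form $\frac{2}{p+2}+\sum_kn_k\bigl(p_k-\frac{2}{p+2}\bigr)$ with $n_k\in\N$ (times powers of $\log t$). Since $p_k-\frac{2}{p+2}\le-\frac{2-p}{p+2}<0$, only finitely many lie above any threshold. That is where $p<2$ enters, not through the sign of $\frac{2p-2}{p+2}$, which is positive for $p>1$. Your exponent formula also simplifies to $2p_k$, whereas quadratic corrections sit at $p_i+p_j-\frac{2}{p+2}\le\frac{2p-2}{p+2}$.
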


\begin{rmk}
We may have $p_k< 0$ for some $k$, so it does not follow directly from \eqref{eq asymptotic parabolic} that the map $x \mapsto P_x$ is injective, and $P_x$ cannot be unique for the same reason.
\end{rmk}

The modes $b^k t^{p_k}$ are solutions to a linearized equation. We need the assumption $p \in (0,2)$ to ensure that there are $dN$ such modes. See Remark \ref{rmk para assumption}. \vspace{8pt}

\noindent
\textbf{Case 3: hyperbolic-parabolic.}

For $v \in \X$, the relation $i \sim j$ given by $v_i= v_j$ is an equivalence on $\{1, \cdots, N\}$. Let $M_v$ denote the set of equivalence classes. Each element in $M_v$ is a cluster and we thus obtain a cluster partition among all bodies.

\begin{thm}[hyperbolic-parabolic solutions] \label{thm mixed}
Assume $v \in \X$ and $P^0 \in C^1 \big( \R_+, \X \times \X \big)$ satisfy: (1) for any $I \in M_v$, $\{P_i^0 \ | \ i \in I\}$ solves the $|I|$-body problem, that is, 
\begin{equation} \label{eq sub N-body}
    \dot{\alpha}_i^0= \beta_i^0, \quad \lambda_i \dot{\beta}_i^0= -\sum_{j \in I \setminus \{i\}} \frac{\lambda_i \lambda_j (\alpha_i^0- \alpha_j^0)}{|\alpha_i^0- \alpha_j^0|^{p+2}}, \quad \forall i \in I;
\end{equation}
(2) there exists $a \in \X$ with $a_i \neq a_j$, $\forall i \sim j, i \neq j$ and $q< \min \big\{\frac{2}{p+2}+ p-1, \frac{2}{p+2} \big\}$ such that
\begin{equation} \label{eq parabolic assumption}
    \alpha^0(t)= a t^{\frac{2}{p+2}}+ O \big( t^q \big) \quad \text{as} \quad t \to +\infty.
\end{equation}
Then there exists $\delta>0$, $T_0>0$, a solution $P(t)$ of \eqref{eq N-body} for $t>T_0$ and $f \in S_{1-p}$ such that 
\begin{equation} \label{eq asymptotic mixed}
    \alpha(t)= vt+ \alpha^0(t)+ f(t)+ o \big( t^{-\delta+} \big) \quad \text{as} \quad t \to +\infty.
\end{equation}

In particular, if $p>1$, we have
\begin{equation}
    \alpha(t)= vt+ \alpha^0(t)+ o \big( t^{1-p+} \big) \quad \text{as} \quad t \to +\infty.
\end{equation}
\end{thm}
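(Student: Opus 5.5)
The plan is to write $\alpha(t)= vt+\alpha^0(t)+f(t)+r(t)$, where $f$ is an explicit correction absorbing the slowly decaying part of the inter-cluster forces, and then to find $r$ by a contraction argument on $(T_0,\infty)$ for a weighted norm $\sup_{t>T_0} t^{\delta}(|r|+t|\dot r|)$.

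\textbf{Splitting the force.} Write $\nabla U=F^{\mathrm{in}}+F^{\mathrm{out}}$, where $F^{\mathrm{in}}$ collects the pairs $i\sim j$ and $F^{\mathrm{out}}$ the pairs $i\not\sim j$. By hypothesis (1), $\ddot\alpha^0=F^{\mathrm{in}}(\alpha^0)$. Since $F^{\mathrm{in}}$ is invariant under adding $vt$, the equation for $g:=f+r$ reads
\begin{equation}
\ddot g = F^{\mathrm{in}}(\alpha^0+g)-F^{\mathrm{in}}(\alpha^0) + F^{\mathrm{out}}(vt+\alpha^0+g).
\end{equation}
For $i\not\sim j$ we have $v_i\ne v_j$, and the inter-cluster distance is $|v_i-v_j|t+O(t^{2/(p+2)})$. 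So $F^{\mathrm{out}}=O(t^{-p-1})$, and it can be Taylor expanded around $(v_i-v_j)t$. Using \eqref{eq parabolic assumption}, the leading terms are $t^{-p-1}$ times powers of $t^{2/(p+2)-1}$, together with the $O(t^q)$ errors.

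\textbf{Constructing $f$.} Integrate this expansion twice from $\infty$, iterating as in the hyperbolic case (Theorem \ref{thm hyperbolic}). Each term $y\,t^{s}$ with $s<-2$ integrates to $y\,t^{s+2}/((s+1)(s+2))$, and $s=-1$ or $s=-2$ produces logarithms. This gives $f\in S_{1-p}$ whose leading term is $\sim t^{1-p}$, and we continue until the residual forcing is $O(t^{-2-\delta'})$ for some $\delta'>0$. Substituting $f$ back into $F^{\mathrm{out}}$ and into the intra-cluster difference produces finitely many further terms of the same type. A finite iteration should suffice because each step gains a fixed positive power $\min\{p,\ 1-\tfrac{2}{p+2},\dots\}$. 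The restriction $q<\frac{2}{p+2}+p-1$ is presumably there to ensure that the error from replacing $\alpha^0$ by $at^{2/(p+2)}$ in $F^{\mathrm{out}}$ is $O(t^{q-p-2})=o(t^{-2-\delta'})$ after the expansion, and the restriction $q<\frac{2}{p+2}$ that intra-cluster separations stay $\sim t^{2/(p+2)}$ because $a_i\ne a_j$.

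\textbf{Fixed point for $r$.} The remainder satisfies
\begin{equation}
r(t)=\int_t^\infty (s-t)\big[\,\text{residual}(s)+L(s)r(s)+N(s,r)\,\big]\,ds,
\end{equation}
where $L(s)=DF^{\mathrm{in}}(\alpha^0+f)+DF^{\mathrm{out}}=O(s^{-2})$. The order $s^{-2}$ comes from the intra-cluster Hessian $|\alpha_i^0-\alpha_j^0|^{-p-2}\sim s^{-2}$, and this borderline decay is the \textbf{main obstacle}: a naive estimate gives $\int_t^\infty (s-t)s^{-2-\delta}\,ds\sim t^{-\delta}$, so the linear map has norm $O(1)$ rather than small, and the contraction fails.

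I would handle this in one of two ways. The first option is to choose $\delta$ small and compare with the explicit constant, noting that $\int_t^\infty (s-t)C s^{-2}s^{-\delta}\,ds = \frac{C}{\delta(1+\delta)}t^{-\delta}$, which is large for small $\delta$; so this needs more care. The better option is to absorb the linear intra-cluster term into the propagator. Within each cluster, treat $\ddot r=DF^{\mathrm{in}}(at^{2/(p+2)})r+\dots$ as an Euler-type system $\ddot r=t^{-2}Ar$, whose solutions are $t^{\mu}$ with $\mu(\mu-1)$ an eigenvalue of $A$. Then pick $\delta$ below the smallest decaying exponent, i.e.\ between $0$ and the decaying roots, and solve using the variation-of-constants kernel for $t^{-2}A$; this is exactly the linearized analysis behind Theorem \ref{thm parabolic}. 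The perturbation $DF^{\mathrm{in}}(\alpha^0+f)-DF^{\mathrm{in}}(at^{2/(p+2)})=O(t^{-2-\epsilon})$, which follows from $q<\frac{2}{p+2}$ and $f=o(t^{2/(p+2)})$ after shrinking, and $DF^{\mathrm{out}}=O(t^{-p-2})$ are then genuinely small for $T_0$ large. The nonlinear term $N(s,r)$ is quadratic in $r$ and hence contractive on a small ball.

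The Banach fixed point theorem then yields $r=O(t^{-\delta})$, i.e.\ $o(t^{-\delta'+})$ for a slightly smaller $\delta'$, proving \eqref{eq asymptotic mixed}. When $p>1$, the correction $f$ consists of terms of order at most $t^{1-p}$, which go to zero, and there is no borderline inter-cluster term. So we can take $\delta=p-1-\epsilon$ and put $f$ into the remainder, which gives $\alpha(t)=vt+\alpha^0(t)+o(t^{1-p+})$.
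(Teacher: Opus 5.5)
Your treatment of the remainder $r$ (absorbing $t^{-2}A$ into an Euler-type propagator) is essentially the paper's Lemma~\ref{lem ODE}. The construction of $f$, however, has a genuine gap.

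You build $f$ by integrating the forcing twice and claim that substituting $f$ back into $F^{\mathrm{in}}(\alpha^0+f)-F^{\mathrm{in}}(\alpha^0)$ gains a fixed power. It does not. The linear part of that difference is $\sum_I\nabla^2U_I(\alpha^0)f\approx t^{-2}Af$, with $A=\sum_{I\in M_v}\nabla^2U_I(a)$. This has exactly the homogeneity of $\ddot f$, which is the same borderline $t^{-2}$ you flag for $r$.

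The problem is hidden at leading order only because the $t^{1-p}$ term of $f$ comes from $F^{\mathrm{out}}(vt)$. That term is constant on each cluster, so $A$ kills it by translation invariance of $U_I$. The next inter-cluster term, linear in $a$, is $t^{-2-p+\frac{2}{p+2}}\sum_{j\not\sim i}\lambda_j\,D\phi(v_i-v_j)(a_i-a_j)$ with $\phi(y)=-y|y|^{-p-2}$. In general it is not cluster-rigid, and it lies at or above $t^{-2}$ whenever $p^2+2p\le 2$. If $g\sim t^{\frac{2}{p+2}-p}$ is its double integral, then $t^{-2}Ag$ regenerates forcing of the same order.

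So your iteration never terminates. For forcing $\sim t^{\gamma}$ it is a Neumann series in $A/((\gamma+1)(\gamma+2))$, which need not converge. It fails outright at resonances $(\gamma+1)(\gamma+2)\in\mathrm{spec}(A)$, where the correct correction carries powers of $\log t$. Double integration cannot produce these, since it only generates logarithms at $\gamma=-1,-2$.

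The missing idea is to keep $t^{-2}Af$ inside the operator when constructing $f$, as in Lemma~\ref{lem expansion mixed}.
\begin{enumerate}
\item At each step take the top-order part $h=wt^{\gamma}\log^m t$ of the residual.
\item Solve $\ddot g=t^{-2}Ag+h$ exactly with the ansatz $g=\sum_j v_j t^{\gamma+2}\log^j t$. This reduces to the triangular system $(A-(\gamma+2)(\gamma+1))v_j=(j+1)(2\gamma+3)v_{j+1}+(j+2)(j+1)v_{j+2}-\delta_{jm}w$.
\item The system is solvable because $\Lambda A$ is symmetric, so $A$ is diagonalizable and $\ker T\cap\im T=\{0\}$ for $T=A-(\gamma+2)(\gamma+1)$ (Lemma~\ref{lem ODE by algebra}).
\end{enumerate}
Only the inter-cluster terms and the intra-cluster terms of order $s\ge 2$ are then perturbative, and these do gain. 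The residual exponents then decrease strictly to $-\infty$, by the inequality $r_i^{(s)}+(s-1)(\frac{2}{p+2}+p-1)\ge\frac{2}{p+2}+p-1$.

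This is also where the hypothesis $q<\frac{2}{p+2}+p-1$ really enters, not the $F^{\mathrm{out}}$ error you guessed. It lets one replace $\nabla^2U_I(\alpha^0)$ by $t^{-2}\nabla^2U_I(a)$ when acting on $f=O(t^{1-p})$, at cost $O(t^{q-\frac{2}{p+2}-1-p})=O(t^{-2-\delta})$. That is precisely the intra-cluster linear response your scheme overlooks.

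For $p>1$ one may take $f=0$, so your final paragraph is fine.
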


\begin{rmk}
If $p \in (0,2)$, then all the parabolic solutions constructed in Theorem \ref{thm parabolic} satisfy \eqref{eq parabolic assumption} by taking $q= \frac{p}{p+2}$.
\end{rmk}

\begin{rmk}
If a singleton $\{i\} \in M_v$, then the assumption implies $\beta_i^0=0$ and $\alpha_i^0$ is constant. If $v \in \Y$, then $M_v$ consists of singletons and Theorem \ref{thm mixed} can be understood as a generalization of Theorem \ref{thm hyperbolic}, while Theorem \ref{thm hyperbolic} gives more explicit expressions and asserts uniqueness.
\end{rmk} 

\noindent 
\textbf{Comments on the results:}

(1) Our results construct families of solutions of expected dimensions. Compare our results to \cite{ExistenceofNbodyproblem} for instance. For any given $v \in \Y$ in the hyperbolic case, and for any given normal $a$ in the parabolic case, they constructed solutions for any initial configuration, while we constructed solutions for any $x \in \X$; there are both $dN$ free parameters after fixing $v$ or $a$. In the hyperbolic-parabolic case, for fixed $v \in \X$ and central configuration $a$ compatible with the cluster partition for $v$, they again constructed solutions for any initial configuration, while we constructed solutions for any intra-cluster parabolic solutions with leading coefficient $a$; if $p \in (0,2)$, this again yields $dN$ free parameters in total according to Theorem \ref{thm parabolic}. 

(2) An application of our theorems in PDE and in the quantum context is the construction of multi-soliton solutions of Hartree equations. Due to the non-local nature of Hartree equations, the soliton centers follow the $N$-body dynamics instead of free motion. The construction of multi-soliton solutions for Hartree equations was first done by \cite{KMR2bodyhartree} for two solitons and was extended to the $N$-soliton case recently by \cite{Hartree3D, Hartree4D, Hartree4Dmixed}, including hyperbolic, parabolic, and hyperbolic-parabolic regimes for the center dynamics. Our results provide a large class of $N$-body trajectories with prescribed asymptotic data at $t=+\infty$, and therefore yield large families of corresponding multi-soliton solutions to Hartree; moreover, the explicit asymptotic expansions established here translate into refined expansions for the multi-soliton solutions. \vspace{8pt}

\noindent
\textbf{Strategy of the proof:} 

The proofs of the three theorems all start with constructing some approximate solution $\tilde{\alpha}(t)$ such that
\begin{equation} \label{eq eq of tilde alpha}
    \ddot{\tilde{\alpha}}(t)= \nabla U(\tilde{\alpha})+ o(t^{-2}).
\end{equation}
Then we take a perturbative perspective and turn to solve 
\begin{equation} \label{eq perturbe tilde alpha}
    \ddot{\alpha}- \ddot{\tilde{\alpha}}= \nabla U(\alpha)- \nabla U(\tilde{\alpha})+ o(t^{-2}).
\end{equation}

For Theorem \ref{thm hyperbolic}, we can construct $\tilde{\alpha}$ explicitly so that \eqref{eq eq of tilde alpha} holds, and \eqref{eq perturbe tilde alpha} is easy to deal with because the right hand side decays fast enough. 

For Theorem \ref{thm parabolic} and Theorem \ref{thm mixed}, $\tilde{\alpha}$ cannot be written out explicitly, but one may expand $\nabla U$ by the Taylor formula and construct $\tilde{\alpha}$ by induction. In each step, we add some term to $\tilde{\alpha}$ that is in the space $S_\gamma$ to cancel out the highest order term left in the expansion. Since more terms may appear because a new term is added to $\tilde{\alpha}$, we provide an argument showing that after finitely many steps, all remaining terms will be of order less than a fixed threshold. This part, as well as the analysis of \eqref{eq perturbe tilde alpha}, requires studying an equation of the form 
\begin{equation}
    \ddot{z}(t)= \frac{Az(t)}{t^2}+ F(t,z(t)),
\end{equation}
where $z$ is a vector function in $t$, $A$ is a constant matrix, and $F$ has a suitable decay. We prove the existence of solutions to such equation in Lemma \ref{lem ODE}. We remark that a straightforward fixed point argument would not work, because $\frac{Az(t)}{t^2}$ always has the same order of decay as $\ddot{z}(t)$. We overcome this by constructing the Green function of the operator $\frac{\d^2}{\d t^2}- \frac{A}{t^2}$. In fact, $b^k$ and $p_k$ in Theorem \ref{thm parabolic} are chosen so that $b^k t^{p_k}$ is in the kernel of this operator.

\section{Hyperbolic solutions}

\begin{proof}[Proof of Theorem \ref{thm hyperbolic}]\

We first consider case (i). We begin with the following lemma.

\begin{lem} \label{lem expansion hyperbolic}
For $x \in \X$ and $v \in \Y$, there exist $c^1, \cdots, c^n \in \X$ only depending on $v$ such that 
\begin{equation} \label{eq expansion of nabla U}
    \nabla U(\alpha^n)- \ddot{\alpha}^n= O \big( t^{-1-(n+1)p} \big) \quad \text{as} \quad t \to +\infty,
\end{equation}
where 
\begin{equation}
    \alpha^n= x+ vt+ \sum_{k=1}^n c^k t^{1-kp}.
\end{equation}
\end{lem}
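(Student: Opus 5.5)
We will determine the coefficients $c^1,\dots,c^n$ one at a time by expanding $\nabla U(\alpha^n)$ in powers of $t^{-p}$ and matching against $\ddot\alpha^n$.

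\textbf{Setting up the expansion.} Write $\alpha^n = t\big(v + w(t)\big)$ with
\begin{equation}
w(t) = x t^{-1} + \sum_{k=1}^n c^k t^{-kp}.
\end{equation}
Since $\nabla U$ is homogeneous of degree $-(p+1)$ on $\Y$,
\begin{equation}
\nabla U(\alpha^n) = t^{-(p+1)}\, \nabla U\big(v + w(t)\big).
\end{equation}
Because $v \in \Y$, the map $\nabla U$ is real-analytic near $v$, and $w(t) \to 0$. So for $t$ large we may Taylor expand $\nabla U(v+w)$ around $v$ to order $n$, with remainder $O(|w|^{n+1})$. Here $|w|^{n+1} = O(t^{-(n+1)p})$, because $t^{-1} \le t^{-p}$ when $p<1$; the borderline case $n=0$, $p\ge 1$ needs a little care, discussed below.

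\textbf{Matching powers.} Collect the resulting terms as a finite sum of monomials $t^{-(p+1)} t^{-jp - m}$ with coefficients depending on $v, x, c^1,\dots,c^n$. On the other side,
\begin{equation}
\ddot\alpha^n = \sum_{k=1}^n (1-kp)(-kp)\, c^k\, t^{-1-kp}.
\end{equation}
The $c^k$ are fixed inductively by matching the $t^{-1-kp}$ coefficients:
\begin{itemize}
\item For $k=1$, the term $t^{-1-p}$ gives $-p(1-p)\,c^1 = \nabla U(v)$.
\item For $k \ge 2$, the coefficient of $t^{-1-kp}$ in $\nabla U(\alpha^n)$ is $\sum_{|\mu|=k-1} \frac{1}{\mu!}\, D^{\mu}\nabla U(v)\,\big[c^{\mu}\big]$, which involves only $c^1,\dots,c^{k-1}$. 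Matching gives
\begin{equation}
-kp(1-kp)\,c^k = \text{(polynomial in } c^1,\dots,c^{k-1}\text{ with coefficients } D^j\nabla U(v)).
\end{equation}
\end{itemize}
Since $kp < 1$ for $k \le n$ (because $p < 1/n$), the factor $kp(1-kp)$ is nonzero, so each $c^k$ is uniquely determined and depends only on $v$.

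\textbf{Handling $x$, which is the main point to check.} Any monomial containing at least one factor of $x t^{-1}$ is of size $t^{-(p+1)}\cdot t^{-1} = t^{-2-p}$ or smaller. We need this to be $O(t^{-1-(n+1)p})$, i.e.
\begin{equation}
2 + p \ge 1 + (n+1)p \iff 1 \ge np,
\end{equation}
which holds since $p < 1/n$. This is also why the $c^k$ do not depend on $x$.

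\textbf{Bounding what is left.} Terms that are pure in the $c^k$ but have total order $j \ge n$ in $w$ are $O(t^{-(p+1)-np}) = O(t^{-1-(n+1)p})$. The Taylor remainder is of the same order. This gives \eqref{eq expansion of nabla U}.

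For $n=0$ (i.e.\ $p\ge1$), the claim reduces to $\nabla U(x+vt) = O(t^{-1-p})$, which is immediate from homogeneity. In that case the Taylor-remainder bound $O(|w|)$ with $|w| = O(t^{-1})$ is still consistent with the required order.

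I expect the only delicate step to be this bookkeeping: verifying that the $x$-dependent terms and the higher cross terms all fall below $t^{-1-(n+1)p}$. Both reduce to the inequalities $np \le 1$ and $kp < 1$ for $k \le n$.
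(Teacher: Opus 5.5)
Your proposal is correct and follows essentially the paper's argument: both factor out $t^{-1-p}$ by homogeneity, discard the $x t^{-1}$ contributions using $np<1$, and fix the $c^k$ inductively by Taylor expanding in powers of $t^{-p}$, using $kp(kp-1)\neq 0$ to solve for each new coefficient. One notational point: your index condition $|\mu|=k-1$ should be read as the weighted degree $\sum_m m\,\mu_m = k-1$, not the order of the derivative, but this does not affect the argument.
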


\begin{proof}
Since $v \in \Y$, when $t$ is large enough, we have $\alpha^n \in \Y$. This means $\nabla U(\alpha^n)$ is well defined. Now \eqref{eq expansion of nabla U} is equivalent to
\begin{equation}
    t^{1+p} \nabla U(\alpha^n)= \sum_{k=1}^n kp(kp-1) c^k t^{-(k-1)p}+ O \big( t^{-np} \big).
\end{equation}
We set $b_{ij}^0= v_i- v_j$ and $x_{ij}= x_i- x_j$, which are given. We also let $b^k= kp(kp-1) c^k$, $b_{ij}^k= c^k_i- c^k_j$. Note that $kp(kp-1) \neq 0$, so $b^k$ would determine $c^k$ and thus $b_{ij}^k$ for all $i,j$. It then suffices to find $b^1, \cdots, b^n$ such that (we write $b^k=(b_1^k, \cdots, b_N^k)$ with $b_i^k \in \R^d$) 
\begin{equation}
    -\sum_{j \neq i} \lambda_j\frac{\sum \limits_{k=0}^l b_{ij}^k t^{-kp}+ x_{ij} t^{-1}}{\bigg| \sum \limits_{k=0}^l b_{ij}^k t^{-kp}+ x_{ij} t^{-1} \bigg|^{p+2}}= \sum_{k=1}^l b_i^k t^{-(k-1)p}+ O \big( t^{-lp} \big), \quad \forall i,\ \forall 1 \le l \le n.
\end{equation}
Note $b_{ij}^0 \neq 0$ if $i \neq j$. Since $lp \le np<1$, the above is further equivalent to 
\begin{equation} \label{eq taylor formula}
    -\sum_{j \neq i} \frac{\lambda_j \sum \limits_{k=0}^l b_{ij}^k t^{-kp}}{\bigg| \sum \limits_{k=0}^l b_{ij}^k t^{-kp} \bigg|^{p+2}}= \sum_{k=1}^l b_i^k t^{-(k-1)p}+ O \big( t^{-lp} \big), \quad \forall i,\ \forall 1 \le l \le n.
\end{equation}

We will determine $b^1, \cdots, b^n$ by induction. We choose $b^1$ by setting 
\begin{equation}
    b_i^1= -\sum_{j \neq i} \frac{\lambda_j b_{ij}^0}{|b_{ij}^0|^{p+2}}, \quad \forall i. 
\end{equation}
Then \eqref{eq taylor formula} holds for $l=1$. Suppose $b^1, \cdots, b^{s-1}$ are chosen such that \eqref{eq taylor formula} holds for $1 \le l \le s-1$. Then the LHS of \eqref{eq taylor formula} with $l=s-1$ is determined and is a smooth function of $t^{-p}$ when $t$ is large, since $b_{ij}^0 \neq 0$. By the Taylor formula, there exist ${b^1}', \cdots, {b^s}' \in \X$ such that
\begin{equation}
    -\sum_{j \neq i} \frac{\lambda_j\sum \limits_{k=0}^{s-1} b_{ij}^k t^{-kp}}{\bigg| \sum \limits_{k=0}^{s-1} b_{ij}^k t^{-kp} \bigg| ^{p+2}}= \sum_{k=1}^s {b^k_i}' t^{-(k-1)p}+ O \big( t^{-sp} \big), \quad \forall i.
\end{equation}
We want to show ${b^k}'=b^k$ for $1 \le k \le s-1$. In fact, the above formula implies
\begin{equation}
    -\sum_{j \neq i} \frac{\lambda_j\sum \limits_{k=0}^{s-1} b_{ij}^k t^{-kp}}{\bigg| \sum \limits_{k=0}^{s-1} b_{ij}^k t^{-kp} \bigg| ^{p+2}}= \sum_{k=1}^{s-1} {b^k_i}' t^{-(k-1)p}+ O \big( t^{-(s-1)p} \big), \quad \forall i.
\end{equation}
But applying \eqref{eq taylor formula} with $l=s-1$ yields 
\begin{equation}
    -\sum_{j \neq i} \frac{\lambda_j \sum \limits_{k=0}^{s-1} b_{ij}^k t^{-kp}}{\bigg| \sum \limits_{k=0}^{s-1} b_{ij}^k t^{-kp} \bigg| ^{p+2}}= \sum_{k=1}^{s-1} b^k_i t^{-(k-1)p}+ O \big( t^{-(s-1)p} \big), \quad \forall i.
\end{equation}
The above two formulas then force ${b^k}'=b^k$ for $1 \le k \le s-1$. We take $b^s= {b^s}'$ and the induction process can be continued.

Also, from the proof we see $b^k$ does not depend on $x$. The lemma is proved.
\end{proof}

Go back to the proof of Theorem \ref{thm hyperbolic}. Let $P^n= (\alpha^n, \beta^n)$, where $\beta^n= \dot{\alpha}^n$ and $\alpha^n$ is as in Lemma \ref{lem expansion hyperbolic}. Let $\B= \Big\{ P \in C^0 \big( [T_0, +\infty); \X \times \X \big) \ \big| \ {\| P- P^n \|}_1 \le R \Big\}$, where
\begin{equation}
    {\| P \|}_1:= \sup_{t \ge T_0} \Big( \epsilon t^{(n+1)p-1} |\alpha(t)|+ t^{(n+1)p} |\beta(t)| \Big),
\end{equation}
and $\epsilon>0$, $R>0$, $T_0>0$ are to be chosen later.

Firstly, if $P \in \B$, then
\begin{equation}
    \alpha(t)= \alpha^n(t)+ O(t^{1-(n+1)p})= vt+ O(t^{1-p}) \;\; \text{as} \;\; t\to \infty.
\end{equation}
Then for $T_0$ large enough, we have $\alpha(t) \in \Y$ for all $t \ge T_0$. This ensures that $\nabla U(\alpha)$ is well defined for $P \in \B$. We define $\Gamma P= (\Gamma \alpha, \Gamma \beta)$ by
\begin{align}
    \Gamma \alpha(t) &= \alpha^n(t)- \int_t^\infty (\beta(\tau)- \beta^n(\tau)) \d \tau, \\
    \Gamma \beta(t) &= \beta^n(t)- \int_t^\infty \big( \nabla U(\alpha(\tau))- \dot{\beta}^n(\tau) \big) \d \tau.
\end{align}
We claim that: if $\epsilon^{-1}, R, T_0$ are large enough, then $\Gamma$ maps $\B$ to $\B$ and is a contraction.

In fact, assume $P \in \B$. Then
\begin{equation}
    |\Gamma \alpha(t)- \alpha^n(t)| \le \int_t^\infty |\beta(\tau)- \beta^n(\tau)| \d \tau \le R \int_t^\infty \tau^{-(n+1)p} \d \tau \lesssim R t^{1-(n+1)p},
\end{equation}
and by the Taylor formula and Lemma \ref{lem expansion hyperbolic},
\begin{align}
    |\Gamma \beta(t)- \beta^n(t)| &\le \int_t^\infty \Big( \big| \nabla U(\alpha(\tau))- \nabla U(\alpha^n(\tau)) \big|+ \big| \nabla U(\alpha^n(\tau))- \dot{\beta}^n(\tau) \big| \Big) \d \tau \\
    &\lesssim \int_t^\infty \big( \tau^{-2-p} \epsilon^{-1} R \tau^{-(n+1)p+1}+ \tau^{-1-(n+1)p} \big) \d \tau \\
    &\lesssim \epsilon^{-1} R t^{-1-p}+ t^{-(n+1)p}.
\end{align}
Here implicit constants only depend on $p$, $x$ and $v$. By taking $\epsilon$ small enough, $R$ large enough and $T_0$ large enough in order, we get $\Gamma P \in \B$. By the same calculation we can show $\Gamma$ is a contraction provided $\epsilon^{-1}, R, T_0$ are large enough. Then the contraction mapping theorem states that $\Gamma$ has a unique fixed point in $\B$.

Since $\frac{\d}{\dt} \Gamma \alpha= \beta$ and $\frac{\d}{\dt} \Gamma \beta= \nabla U(\alpha)$, we see the fixed point of $\Gamma$ in $\B$ solves \eqref{eq N-body} for $t>T_0$. Now suppose $P$ solves \eqref{eq N-body} for $t>T_0$ and satisfies \eqref{eq asymptotic hyperbolic}. Then 
\begin{equation}
    \lim_{t \to +\infty} |\alpha(t)- \alpha^n(t)|= \lim_{t \to +\infty} |\beta(t)- \beta^n(t)|= 0.
\end{equation}
Thus by the fundamental theorem of calculus, we have
\begin{equation}
    \Gamma \alpha(t)= \alpha^n(t)- \int_t^\infty \big( \dot{\alpha}(\tau)- \dot{\alpha}^n(\tau) \big) \d \tau= \alpha^n(t)+ \alpha(t)- \alpha^n(t)= \alpha(t)
\end{equation}
and similarly $\Gamma \beta(t)= \beta(t)$. Finally, note that $P$ satisfies \eqref{eq asymptotic hyperbolic} $\iff$ $P \in \B$ for some $\epsilon>0, R>0$ and large enough $T_0>0$. We conclude the proof of case (i).

For case (ii), using the proof of Lemma \ref{lem expansion hyperbolic}, we can prove that: for $x \in \X$ and $v \in \Y$, there exist $c^1, \cdots, c^{n+1} \in \X$ only depending on $v$ such that 
\begin{equation}
    \nabla U(\alpha^n)- \ddot{\alpha}^n= O \big( t^{-1-(n+2)p} \log t \big) \quad \text{as} \quad t \to +\infty, 
\end{equation}
where
\begin{equation}
    \alpha^n= x+ vt+ \sum_{k=1}^n c^k t^{1-kp}+ c^{n+1} \log t.
\end{equation}
We replace the norm ${\| \cdot \|}_1$ by ${\| \cdot \|}_2$ defined as
\begin{equation}
    {\| P \|}_2:= \sup_{t \ge T_0} \Big( \frac{\epsilon t^{(n+2)p-1}}{\log t} |\alpha(t)|+ \frac{t^{(n+2)p}}{\log t} |\beta(t)| \Big),
\end{equation}
and define $\mathcal{B}$ and $\Gamma$ as above. We can obtain the result by the same argument.
\end{proof}

\section{Parabolic solutions}

As mentioned briefly in the introduction, we need to apply the Taylor formula for $\nabla U$. Before diving into the proof, we clarify our notation for doing calculus on $\X$ under the metric induced by $|\cdot|_\lambda^2$, especially when we establish the Taylor formula.

For $x \in \X$, we write $x=(x_1, \cdots, x_N)^T$, where $x_i \in \R^d$. We equip $\X$ with the inner product
\begin{equation}
    (x,y)_\lambda := \sum_{i=1}^N \lambda_i x_i \cdot y_i \quad \text{and} \quad |x|_\lambda^2 := (x,x)_\lambda,
\end{equation}
where the dot product $\cdot$ represents Euclidean inner product. Since $\X= \R^{dN}$ is a vector space, for any $x \in \X$, we may equate the tangent space $T_x \X$ with $\X$ itself. Then this inner product induces a Riemannian metric on $\X$. We abuse the notation to let $|\cdot|_\lambda^2$ denote the metric. Let
\begin{equation}
    \Lambda= \mathrm{diag}(\lambda_1 I_d, \lambda_2 I_d, \cdots, \lambda_N I_d) \in \R^{dN \times dN}.
\end{equation}
Then $(x,y)_\lambda= x^T \Lambda y$ as standard matrix multiplication in standard coordinates.

For differentiation, the general principle is to let $D$ denote standard ones and to let $\nabla$ denote those under $|\cdot|_\lambda^2$.

Let $f$ be a function on $\X$. Let $Df$ and $D^2f$ represent standard Euclidean differential and Euclidean Hessian, and naturally realize them as vector and matrix, respectively. We mark that both $Df$ and the elements in $\X$ should be realized as column vectors whenever it matters. 

We let $\nabla f$ be the gradient of $f$ with respect to $|\cdot|_\lambda^2$. This means for $x \in \X$, $\nabla f(x)$ is a tangent vector in $T_x \X$, or a $(1,0)$-tensor, and for any $v \in T_x \X$, 
\begin{equation}
    (\nabla f(x), v)_\lambda= D f(x) \cdot v.
\end{equation}
If we realize $\nabla f(x)$ as a column vector under the standard basis, then \begin{equation}
    \nabla f(x)= \Lambda^{-1} Df(x).
\end{equation} 
This is how we understand \eqref{eq N-body tight form} and agrees with the definition there. 

For $s \in \Z_{\ge 1}$, we let $\nabla^{s+1} f$ denote the $(1,s)$-tensor obtained by applying the Levi--Civita connection $\nabla$ to the gradient vector $\nabla f$ for $s$ times.\footnote{We follow the common convention that $\nabla$ denotes both the gradient and the connection, so $\nabla^{s+1}f$ denotes the $s$-fold covariant derivative of $\nabla f$, which is a $(1,s)$-tensor. Some authors write $\mathrm{grad}(f)$ for the gradient to avoid confusion in geometry; we keep $\nabla f$ to match standard celestial mechanics notation.} Then for $x \in \X$ and $v \in T_x \X$, $\nabla^{s+1} f(x)$ is a $(1,s)$-tensor and $v^{\otimes s}$ is an $(s,0)$-tensor. We let $\nabla^{s+1}f(x)(v^{\otimes s})$ be the $(1,0)$-tensor obtained by contracting the $s$ covariant indices of $\nabla^{s+1}f(x)$ and the $s$ contravariant indices of $v^{\otimes s}$.

Using this notation, the Taylor formula for $\nabla f$ is as follows. For $x, y \in \X$, 
\begin{equation}
    \nabla f(y)= \nabla f(x)+ \sum_{s=1}^n \frac{1}{s!} \nabla^{s+1} f(x) ((y-x)^{\otimes s})+ O(|x-y|_\lambda^{n+1}).
\end{equation}
In practice, the function to which we will apply the Taylor formula is $\nabla U$, which is only defined on $\Y$. The above applies when the line segment connecting $x$ and $y$ is contained in $\Y$.

In particular, $\nabla^2 f(x)$ is a $(1,1)$-tensor, and we realize it as a matrix $A \in \R^{dN \times dN}$ so that for any $v \in T_x \X$, $\nabla^2 f(x)(v)$ is equal to the standard matrix multiplication $Av$. More concretely, we take
\begin{equation}
    A= \Lambda^{-1} D^2f(x).
\end{equation}
Note that the Euclidean Hessian matrix $D^2f(x)$ is symmetric, and $\nabla^2 f(x)$ as a $(1,1)$-tensor is symmetric with respect to $(\cdot, \cdot)_\lambda$, but the matrix realization $A$ is not symmetric.

\begin{proof}[Proof of Theorem \ref{thm parabolic}]\

Recall that $a$ is the given vector in $\Y$ that is normal. Let $A$ be the matrix representation of $\nabla^2 U(a)$ in $\R^{dN \times dN}$ as defined above. We need a lower bound of the eigenvalues of $A$.

\begin{lem} \label{lem linear algebra}
The matrix $A$ is diagonalizable and each eigenvalue is real and $\ge -\frac{2p}{(p+2)^2}$.
\end{lem}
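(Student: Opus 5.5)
Diagonalizability and reality of the spectrum come from symmetry. By definition $A=\Lambda^{-1}D^2U(a)$ with $D^2U(a)$ symmetric and $\Lambda$ positive definite diagonal. Hence $A$ is similar to $\Lambda^{1/2}A\Lambda^{-1/2}=\Lambda^{-1/2}D^2U(a)\Lambda^{-1/2}$, which is a real symmetric matrix. So $A$ is diagonalizable over $\R$, with real eigenvalues, and it has a basis of eigenvectors orthonormal with respect to $(\cdot,\cdot)_\lambda$. Equivalently, $\nabla^2U(a)$ is self-adjoint for $(\cdot,\cdot)_\lambda$, and every eigenvalue equals a Rayleigh quotient $(Av,v)_\lambda/|v|_\lambda^2=v^TD^2U(a)v/|v|_\lambda^2$.

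For the lower bound, I would use the normality and minimality of $a$. Plugging $\alpha(t)=at^{2/(p+2)}$ into $\ddot\alpha=\nabla U(\alpha)$ and using that $\nabla U$ is homogeneous of degree $-(p+1)$ gives
\[
\tfrac{2}{p+2}\big(\tfrac{2}{p+2}-1\big)\,a=\nabla U(a),\qquad\text{i.e.}\qquad \nabla U(a)=-\mu a,\quad \mu=\frac{2p}{(p+2)^2}.
\]
Minimality says $a$ minimizes $U$ on the sphere $\{|\tilde a|_\lambda=|a|_\lambda\}$ intersected with $\Y$, an open subset of the sphere containing $a$. The second-order necessary condition for a constrained minimum then applies, with Lagrangian $U+\tfrac{\mu}{2}|\cdot|_\lambda^2$; the multiplier is consistent with $\nabla U(a)=-\mu a$. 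It gives, for every $v$ with $(v,a)_\lambda=0$,
\[
(\nabla^2U(a)v,v)_\lambda+\mu|v|_\lambda^2\ge 0 .
\]
This can be verified directly by expanding $U$ along the curve $\gamma(s)=|a|_\lambda\frac{a+sv}{|a+sv|_\lambda}$, whose second derivative at $0$ is $-|v|_\lambda^2a/|a|_\lambda^2$. So $(Av,v)_\lambda\ge-\mu|v|_\lambda^2$ on $a^{\perp_\lambda}$.

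The direction $a$ itself needs separate treatment. Euler's relation for the degree $-(p+1)$ homogeneous map $\nabla U$ gives $\nabla^2U(a)a=-(p+1)\nabla U(a)=(p+1)\mu a$. Thus $a$ is an eigenvector with the positive eigenvalue $(p+1)\mu$. Since $\nabla^2U(a)$ is $\lambda$-self-adjoint, $a^{\perp_\lambda}$ is an invariant subspace. The remaining eigenvectors can be chosen in $a^{\perp_\lambda}$, and there the Rayleigh quotient bound gives eigenvalues $\ge-\mu=-\frac{2p}{(p+2)^2}$. This proves the claim.

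The step I expect to need the most care is the second-order condition. I must make sure the constrained minimum is genuinely local: $\Y$ is open, so a small arc of the sphere around $a$ stays in $\Y$. I also need the multiplier to equal exactly $\mu$, including its sign, since the constraint gradient is $a$; this follows from the computation in the normality identity. Translation invariance creates no difficulty. Translations are not tangent to the sphere in general, but any direction $v\perp_\lambda a$ is admissible, so the bound covers all of $a^{\perp_\lambda}$.
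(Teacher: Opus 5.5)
Your proof is correct. The diagonalizability part is the same as the paper's; for the lower bound you take a mildly different route.

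\textbf{The paper's route.} It sidesteps the sphere constraint. It observes that $a$ is an unconstrained minimizer on $\Y$ of the scale-invariant function $f(\alpha)=U(\alpha)|\alpha|_\lambda^p$. It then computes $D^2f(a)$ and inserts $DU(a)=-\mu\Lambda a$ and $U(a)=\frac{2}{(p+2)^2}|a|_\lambda^2$ (with $\mu=\frac{2p}{(p+2)^2}$ as in your notation). This gives
$D^2f(a)=|a|_\lambda^p(\Lambda A+\mu\Lambda)-\frac{2p}{p+2}|a|_\lambda^{p-2}\Lambda a\otimes\Lambda a$.
Dropping the nonnegative rank-one term yields $\Lambda A+\mu\Lambda\ge 0$ on all of $\X$ in one step. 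No constrained second-order condition, radial/tangential splitting, or invariant-subspace argument is needed.

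\textbf{Your route.} You compute the second variation along a curve on the $\lambda$-sphere, where the multiplier enters through $\gamma''(0)$, to get the bound on $a^{\perp_\lambda}$. You treat the radial direction separately via Euler's relation $Aa=(p+1)\mu a$. You then combine the two pieces using $\lambda$-self-adjointness.

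\textbf{Comparison.} Your argument is a bit longer, but it exposes structure the paper's computation hides. It shows that $a$ is an explicit eigenvector with eigenvalue $\frac{2p(p+1)}{(p+2)^2}$; its admissible exponent is $-\frac{p}{p+2}$, the time-translation mode. It also shows that the bound $-\mu$ comes only from tangential directions. The two arguments match exactly: since $f$ is scale-invariant, $D^2f(a)$ vanishes in the direction $a$. The rank-one term the paper discards is precisely the radial positivity $(\Lambda A+\mu\Lambda)(a,a)=(p+2)\mu|a|_\lambda^2$ that your Euler-relation computation makes explicit.
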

\begin{proof}
Recall that $\Lambda A= D^2U(a)$ is symmetric. Since $\Lambda$ is a diagonal matrix with positive diagonal entries, we can define $\Lambda^{-\frac{1}{2}}$, which is also diagonal. We have
\begin{equation}
    \Lambda^{\frac{1}{2}} A \Lambda^{-\frac{1}{2}}= \Lambda^{-\frac{1}{2}} (\Lambda A) \Lambda^{-\frac{1}{2}},
\end{equation}
so $A$ is similar to $\Lambda^{-\frac{1}{2}} (\Lambda A) \Lambda^{-\frac{1}{2}}$, which is symmetric. Thus, $A$ is diagonalizable (over $\R$). 

We know $A$ and $\Lambda^{-\frac{1}{2}} (\Lambda A) \Lambda^{-\frac{1}{2}}$ have the same set of eigenvalues. Since
\begin{equation}
    \Lambda^{-\frac{1}{2}} (\Lambda A) \Lambda^{-\frac{1}{2}}= \Lambda^{-\frac{1}{2}} \Big( \Lambda A+ \frac{2p}{(p+2)^2} \Lambda \Big) \Lambda^{-\frac{1}{2}}- \frac{2p}{(p+2)^2}I
\end{equation}
and $\Lambda^{-\frac{1}{2}} \big( \Lambda A+ \frac{2p}{(p+2)^2} \Lambda \big) \Lambda^{-\frac{1}{2}}$ is congruent to $\Lambda A+ \frac{2p}{(p+2)^2} \Lambda$, it suffices to prove
\begin{equation} \label{eq lower bound of A}
    \Lambda A+ \frac{2p}{(p+2)^2} \Lambda \ge 0.
\end{equation}

Let $f(a):= U(a) |a|_{\lambda}^p$. Since $a$ is minimal and $U$ is homogeneous of degree $-p$, we see $a$ is a minimizer of $f$ on $\Y$. This implies $D^2f(a) \ge 0$. We compute
\begin{equation}
    Df(a)= D U(a) |a|_\lambda^p+ pU(a) |a|_\lambda^{p-2} \Lambda a
\end{equation}
and thus
\begin{equation}
    D^2 f(a)= D^2 U(a) |a|_\lambda^p+ 2p |a|_\lambda^{p-2} D U(a) \otimes \Lambda a+ p(p-2) U(a) |a|_\lambda^{p-4} \Lambda a \otimes \Lambda a+ pU(a) |a|_\lambda^{p-2} \Lambda.
\end{equation}
Since $a$ is normal, we have $\frac{\d^2}{\dt^2} (at^{\frac{2}{p+2}})= \nabla U(at^{\frac{2}{p+2}})$, which yields $D U(a)= -\frac{2p}{(p+2)^2} \Lambda a$. By homogeneity of $U$, we have $DU(a) \cdot a= -pU(a)$, so $U(a)= \frac{2}{(p+2)^2} |a|_\lambda^2$. We then get
\begin{equation}
    D^2 f(a) = |a|_\lambda^p \Lambda A- \frac{2p}{p+2} |a|_\lambda^{p-2} \Lambda a \otimes \Lambda a+ \frac{2p}{(p+2)^2} |a|_\lambda^p \Lambda \le |a|_\lambda^p \Lambda A+ \frac{2p}{(p+2)^2} |a|_\lambda^p \Lambda.
\end{equation}
Then we obtain \eqref{eq lower bound of A} in view of $D^2 f(a) \ge 0$. This implies the lower bound of the eigenvalues.
\end{proof}

From Lemma \ref{lem linear algebra}, there are eigenvectors $b^1, \cdots, b^{dN}$ of $A$ forming a basis of $\X$, with eigenvalues $c_1, \cdots, c_{dN}$, respectively, and $c_i \ge -\frac{2p}{(p+2)^2}$. Then there exists $p_i \le \frac{p}{p+2}$ such that $p_i^2- p_i= c_i$ for each $i$. This construction ensures $\frac{\d^2}{\dt^2} b^i t^{p_i}=A b^i t^{p_i}$. We may assume $p_1 \ge \cdots \ge p_{dN}$. 

\begin{rmk} \label{rmk para assumption}
If $p \in (0,2)$, then we have $p_i< \frac{2}{p+2}$, the latter being the self-similar exponent. This makes $t^{p_i}$ sub-leading terms, and thus they are allowed in the expansion. 
\end{rmk}

\;\\
\textbf{Case 1:} $p \in (0,1)$.

This case is easier to deal with and we have more explicit expressions.

Let $x \in \X$. We will inductively construct $P_x^k(t)= P^k(t)$ for $1 \le k \le dN$ with each $P^k(t)$ being a solution of \eqref{eq N-body} for large $t$, and for all $k=1, 2, \cdots, dN$,
\begin{equation} \label{eq induction asymptotic}
    \alpha^k(t)= \alpha^{k-1}(t)+ x_k b^k t^{p_k}+ o \big( t^{-\frac{2-p}{p+2}+ p_k+} \big) \quad \text{as} \quad t \to +\infty,
\end{equation}
where we set $\alpha^0(t)= at^{\frac{2}{p+2}}$ and $\beta^0(t)= \dot{\alpha}^0(t)$. 

Suppose $P^{k-1}(t)$ is constructed for some $k \ge 1$ with required properties. Let
\begin{equation}
    \alpha^k(t) = \alpha^{k-1}(t)+ x_k b^k t^{p_k}+ z(t)
\end{equation}
and assume $|z(t)| \le t^{p_k}$. For any position $\tilde{\alpha}$ on the segment connecting $\alpha^k(t)$ and $\alpha^{k-1}(t)$, 
\begin{equation}
    \tilde{\alpha}(t)= at^{\frac{2}{p+2}}+ O(t^{\frac{p}{p+2}}).
\end{equation}
This means $\tilde{\alpha}(t) \in \Y$, thus we can apply the Taylor formula  to get
\begin{align}
    \nabla U(\alpha^k)- \nabla U(\alpha^{k-1})= \nabla^2 U(\alpha^{k-1}) (\alpha^k- \alpha^{k-1})+ O \big( t^{-2-\frac{2}{p+2}+ 2p_k} \big)
\end{align}
using $\alpha^k- \alpha^{k-1}= O(t^{p_k})$. Since $\alpha^{k-1}= at^{\frac{2}{p+2}}+ O(t^\frac{p}{p+2})$, we further have
\begin{equation}
    \nabla^2 U(\alpha^{k-1}) (\alpha^k- \alpha^{k-1})= \frac{A (\alpha^k- \alpha^{k-1})}{t^2}+ O(t^{-2- \frac{2-p}{p+2}+ p_k}).
\end{equation}
Note that $p_k \le \frac{p}{p+2}$. Set $\delta= \frac{2-p}{p+2}- p_k>0$. Then we get
\begin{equation}
    \nabla U(\alpha^k)- \nabla U(\alpha^{k-1})= \frac{A (\alpha^k- \alpha^{k-1})}{t^2}+ O(t^{-2-\delta}).
\end{equation}
By similar calculations, the partial derivative in $z$ of the $O(t^{-2-\delta})$ term is $O(t^{-2-\frac{2-p}{p+2}})$.

Note that $y(t):= x_k b^k t^{p_k}$ solves $\ddot{y}(t)= \frac{A y(t)}{t^2}$. The equation $\ddot{\alpha}^k= \nabla U(\alpha^k)$ then becomes 
\begin{equation} \label{eq ODE1}
    \ddot{z}(t)= \frac{A z(t)}{t^2}+ F(t,z),
\end{equation}
where $F$ satisfies
\begin{equation}
    \sup_{|z| \le t^{p_k}} |F(t,z)| \lesssim t^{-2-\delta}, \quad \sup_{|z| \le t^{p_k}} |\nabla_z F(t,z)| \lesssim t^{-2-\frac{2-p}{p+2}}, \quad \forall t>0.
\end{equation}

\begin{lem} \label{lem ODE}
Let $\delta>0$, $\kappa>0$ and $\epsilon< \delta$. Assume $B \in \R^{dN \times dN}$ is diagonalizable over $\C$ and $F: \R_+ \times \X \to \X$ satisfies
\begin{equation} \label{eq estimate of F}
    \sup_{|z| \le t^{-\epsilon}} |F(t,z)| \lesssim t^{-2-\delta}, \quad \sup_{|z| \le t^{-\epsilon}} |\nabla_z F(t,z)| \lesssim t^{-2-\kappa}, \quad \forall t>0.
\end{equation}
Then there exists $z(t)= o(t^{-\delta+})$ as $t \to +\infty$ such that the following holds for large $t$:
\begin{equation} \label{eq ODE with decay}
    \ddot{z}(t)= \frac{Bz(t)}{t^2}+ F(t,z(t)).
\end{equation}
\end{lem}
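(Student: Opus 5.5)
The plan is to invert the Euler-type operator $\frac{\d^2}{\dt^2}-\frac{B}{t^2}$ explicitly mode by mode, and then solve \eqref{eq ODE with decay} by a contraction argument in a weighted sup norm. Since $B$ is diagonalizable over $\C$, write $B=Q\,\mathrm{diag}(\mu_1,\dots,\mu_{dN})Q^{-1}$ and set $w=Q^{-1}z$, $g=Q^{-1}F$. The linear part then decouples into the scalar equations $\ddot w_k=\mu_k w_k/t^2+g_k$, possibly with complex coefficients; at the end we recombine and take real parts, or simply note that $Q\,\cdot$ of the solution solves the real equation. For each $k$, let $r_k^\pm\in\C$ be the roots of $r(r-1)=\mu_k$. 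The homogeneous solutions are $t^{r_k^+}$ and $t^{r_k^-}$, or $t^{1/2}$ and $t^{1/2}\log t$ when the roots coincide. Variation of parameters writes a solution as a sum of terms of the form
\begin{equation}
  \frac{t^{r}}{r-r'}\int s^{1-r}g_k(s)\,\d s
\end{equation}
over the two roots $r\in\{r_k^+,r_k^-\}$ with $r'$ the other root, with the obvious modification involving $\log$ factors in the double-root case.

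The key choice is the integration endpoint for each root, and it must be fixed once and for all, independently of the auxiliary weight used below. If $\Re r>-\delta$, integrate from $t$ to $\infty$. If $\Re r\le-\delta$, integrate from $T_0$ to $t$. This defines a linear operator $\mathcal G$, and I will prove the following mapping property. For every $\gamma\in(\epsilon,\delta]$ with $\gamma\ne-\Re r$ for all roots, and every $\eta>0$, the bound $|g(s)|\lesssim s^{-2-\gamma-\eta}$ implies $|\mathcal G g(t)|\lesssim t^{-\gamma-\eta}$, uniformly for $t\ge T_0$.

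To prove it for a root with $\Re r>-\delta$, we have $\int_t^\infty s^{-1-\Re r-\gamma-\eta}\,\d s\sim t^{-\Re r-\gamma-\eta}$, which converges as long as $\Re r>-\gamma-\eta$. If $-\delta<\Re r\le-\gamma-\eta$, I instead take the $\limsup$ version: choose $\gamma$ so close to $\delta$ that no root has real part in $(-\delta,-\gamma]$, which is possible because there are finitely many roots. For roots with $\Re r\le-\delta$, $\int_{T_0}^t s^{-1-\Re r-\gamma-\eta}\,\d s\lesssim t^{-\Re r-\gamma-\eta}$ because the exponent $-1-\Re r-\gamma-\eta$ is $>-1$ (modulo the borderline case $\Re r=-\delta=-\gamma$, which is excluded by taking $\gamma<\delta$ strictly). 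The double-root case only adds $\log$ factors, and these are harmless because we work with $\gamma<\delta$ strictly.

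Now fix $\gamma\in(\epsilon,\delta)$ close to $\delta$ as above. Define
\begin{equation}
  \|z\|:=\sup_{t\ge T_0}t^{\gamma}|z(t)|,
\end{equation}
let $\mathcal B$ be the ball $\|z\|\le R$, and set $\Phi z:=Q\,\mathcal G\big(Q^{-1}F(\cdot,z(\cdot))\big)$. Since $\gamma>\epsilon$, for $T_0$ large every $z\in\mathcal B$ satisfies $|z|\le t^{-\epsilon}$, so the hypotheses \eqref{eq estimate of F} apply. First, $|F(t,z)|\lesssim t^{-2-\delta}=t^{-2-\gamma-(\delta-\gamma)}$, so $\|\Phi z\|\lesssim T_0^{-(\delta-\gamma)}\le R$. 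Second, $|F(t,z_1)-F(t,z_2)|\lesssim t^{-2-\kappa-\gamma}\|z_1-z_2\|$, so $\Phi$ is a contraction with constant $\lesssim T_0^{-\kappa}$; here $\gamma+\kappa$ should also avoid $-\Re r$, which holds after shrinking $\kappa$ slightly if needed. The fixed point is $C^2$ and solves \eqref{eq ODE with decay} by direct differentiation of the variation-of-parameters formula.

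Finally, to get $o(t^{-\delta+})$ rather than $O(t^{-\gamma})$ for a single $\gamma$: the operator $\mathcal G$ does not depend on $\gamma$, and a fixed point for a larger $\gamma$ lies in the ball for any smaller $\gamma$. By uniqueness in the smaller ball, the fixed points coincide, so $z=O(t^{-\gamma})$ for all $\gamma<\delta$ close to $\delta$. To make this rigorous, I would take $T_0$ depending on $\gamma$, and note that uniqueness on $[T_0(\gamma'),\infty)$ for the smaller $\gamma'$ identifies the two solutions there.

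The main obstacle is choosing the endpoints of the Green function uniformly, so that one fixed operator works for every weight. The resonant roots with real part near $-\delta$, together with the double-root logs, are the bookkeeping that must be handled carefully.
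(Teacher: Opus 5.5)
Your scheme is the paper's. You diagonalize $B$ over $\C$ and invert $\frac{\d^2}{\dt^2}-\frac{\mu}{t^2}$ mode by mode. You integrate from $t$ to $\infty$ when $\Re r>-\delta$ and from $T_0$ to $t$ when $\Re r\le-\delta$. You then contract in a sup norm weighted by an exponent in $(\epsilon,\delta)$. The paper secures convergence of the $\int_t^\infty$ integrals in the Lipschitz estimate by raising $\epsilon$ above $\delta-\kappa$; you do it by choosing $\gamma$ near $\delta$ so that no root has $\Re r\in(-\delta,-\gamma]$. Either works, and your existence argument goes through.

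Your stated mapping property is too strong, though. In the branch $\Re r\le-\delta$ it fails once $\gamma+\eta>-\Re r$: then $\int_{T_0}^t$ converges and the output is only of size $t^{\Re r}T_0^{-\Re r-\gamma-\eta}$. Also, the borderline $\Re r=-(\gamma+\eta)$ is not excluded by $\gamma<\delta$. In the self-map estimate $\gamma+\eta=\delta$, so a root with $\Re r=-\delta$ gives $t^{-\delta}\log(t/T_0)$. Neither point hurts what you actually use, since $\sup_{t\ge T_0}t^{\gamma}|\mathcal G g(t)|$ is still small. It is $\lesssim T_0^{\gamma-\delta}/(\delta-\gamma)$ for the self-map, and $\lesssim T_0^{-\kappa}$ for the Lipschitz bound, using $\gamma+\Re r<0$. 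You should weaken the statement accordingly.

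The genuine gap is the last step, where you obtain $o(t^{-\delta+})$ by identifying fixed points for different $\gamma$. The operator $\mathcal G$ depends on $T_0$ through the lower limit in the modes with $\Re r\le-\delta$. Moving $T_0$ from $T_0(\gamma')$ to $T_0(\gamma)$ changes $\mathcal G g$ by $\sum_{\Re r\le-\delta}c_r t^{r}v_r$, where $c_r$ comes from $\int_{T_0(\gamma')}^{T_0(\gamma)}s^{1-r}g_r(s)\,\d s$. So $z_{\gamma'}$ restricted to $[T_0(\gamma),\infty)$ is not a fixed point of the map defining $z_\gamma$, and the two solutions generally differ. Since $T_0(\gamma)\to\infty$ as $\gamma\to\delta$ (at least when some root has $\Re r=-\delta$), uniqueness never produces a single $z$ that is $O(t^{-\gamma})$ for all $\gamma<\delta$.

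The fix is the one-line bootstrap the paper uses, and your own estimates already contain it. A fixed point satisfies $|z(t)|\le t^{-\epsilon}$, so $|F(t,z(t))|\lesssim t^{-2-\delta}$. Applying the pointwise Green-function bound to $z=\Phi z$ then gives $|z(t)|\lesssim t^{-\delta}\log t=o(t^{-\delta+})$ directly, with one fixed $\gamma$ and one $T_0$.
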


This lemma is similar to \cite[Lemma~5.2]{Hartree3D}. We will give the proof in the appendix.  

By Lemma \ref{lem linear algebra}, we can apply Lemma \ref{lem ODE} to solve \eqref{eq ODE1}, then we can find $P^k$ with the required properties.

Finally, we set $P_x(t)= P_x^{dN}(t)$. Since $p_k \le \frac{p}{p+2}$, $\forall k$, we can derive \eqref{eq asymptotic parabolic p<1} from \eqref{eq induction asymptotic}. It remains to show injectivity. Suppose $P_x(t)= P_y(t)$ and $x \neq y \in \X$. Take the minimal $l \in \{1, \cdots, dN\}$ such that $x_l \neq y_l$. Then we have $P_x^{l-1}(t)= P_y^{l-1}(t)$ by construction. Adding up \eqref{eq induction asymptotic}, we get 
\begin{equation}
    \sum_{k=l}^{dN} (x_k-y_k) b^k t^{p_k}= o \big( t^{-\frac{2-p}{p+2}+ p_l+} \big).
\end{equation}
Take the maximal $u \ge l$ such that $p_u= p_l$. Then we have 
\begin{equation}
    \sum_{k=l}^u (x_k- y_k) b^k t^{p_l}= o(t^{p_l}),
\end{equation}
which means $\sum_{k=l}^u (x_k- y_k) b^k=0$. But this forces $x_l= y_l$ since $\{b^k\}$ forms a basis. We obtain a contradiction. Therefore, we conclude that $x \mapsto P_x(t)$ is injective.

\;\\
\textbf{Case 2:} The general case $p \in (0,2)$.

The strategy will be similar but we need refined estimates and induction arguments. 

Take $n \in \N$ such that $n> 1+ \frac{p}{2-p}$ and take 
\begin{equation}
    \epsilon \in \left( 0, \frac{(2-p)(n-1)-p}{p+2} \right), \quad \delta= \epsilon+ \frac{2-p}{p+2}.
\end{equation}

Let $x \in \X$. We will inductively construct $P_x^k(t)= P^k(t)$ for $1 \le k \le dN$ with each $P^k(t)$ being a solution of \eqref{eq N-body} for large $t$, and for all $k=1, 2, \cdots, dN$:
\begin{itemize}
    \item if $p_k \ge 0$, then for some $f_k \in S_{p_k- \frac{2-p}{p+2}}$, one has
    \begin{equation} \label{eq induction asymptotic hard} 
        \alpha^k(t)= \alpha^{k-1}(t)+ x_k b^k t^{p_k}+ f_k(t)+ o \big( t^{-\delta+} \big) \quad \text{as} \quad t \to +\infty,
    \end{equation}
    \item if $p_k<0$, then \eqref{eq induction asymptotic} holds,
\end{itemize}
where we set by convention $\alpha^0(t)= at^{\frac{2}{p+2}}$, $\beta^0(t)= \dot{\alpha}^0(t)$. 

Suppose $P^{k-1}(t)$ is constructed for some $k \ge 1$ with required properties. 

If $p_k<0$, then we have $\frac{2-p}{p+2}- p_k>0$, and the proof is identical to that of case 1. In the following we assume $p_k \ge 0$.

Note that $p_k \ge 0$ implies $p_i \ge 0$ for $1 \le i \le k-1$, so \eqref{eq induction asymptotic hard} holds for these $i$. Using $p_k \le \frac{p}{p+2}$ and $p \in (0,2)$, we deduce that there exists $g \in S_{\frac{p}{p+2}}$ such that
\begin{equation}
    \alpha^{k-1}(t)= at^{\frac{2}{p+2}}+ g(t)+ O \big( t^{-\epsilon} \big).
\end{equation}
Let $f_k \in S_{p_k- \frac{2-p}{p+2}}$, $|z(t)| \le t^{-\epsilon}$ be to be determined later, and
\begin{equation}
    \alpha^k(t) = \alpha^{k-1}(t)+ x_k b^k t^{p_k}+ f_k(t)+ z(t).
\end{equation}
Again, we will try to write \eqref{eq N-body} in the form of \eqref{eq ODE1}. However, in this case, the remainders in the Taylor formula will be larger, so we need expansions up to higher orders.

Since $\alpha^k- \alpha^{k-1}= O(t^{p_k})$, by the Taylor formula, we have
\begin{align}
    \nabla U(\alpha^k)- \nabla U(\alpha^{k-1})= \sum_{s=1}^n \frac{1}{s!} \nabla^{s+1} U(\alpha^{k-1}) \big( (\alpha^k- \alpha^{k-1})^{\otimes s} \big)+ O \big( t^{-2-\frac{2n}{p+2}+ (n+1)p_k} \big).
\end{align}
For simplicity, we write $b=x_k b^k$, $q=p_k$ and $f(t)= f_k(t)$. We further have
\begin{align}
    \nabla^{s+1} U(\alpha^{k-1}) \big( (\alpha^k- \alpha^{k-1})^{\otimes s} \big)= \nabla^{s+1} U(at^{\frac{2}{p+2}}+ g(t)) \big( (b t^q+ f(t))^{\otimes s} \big)+ O \big( t^{-2-\frac{2-p}{p+2}- \epsilon} \big)
\end{align}
for $s \ge 2$ and 
\begin{align}
    \nabla^2 U(\alpha^{k-1}) (\alpha^k- \alpha^{k-1}) &= \nabla^2 U(at^{\frac{2}{p+2}}+ g(t)) \big( b t^q+ f(t) \big)+ \frac{Az(t)}{t^2} \\
    &\quad + O \big( t^{-2-\frac{2-p}{p+2}- \epsilon} \big).
\end{align}
Therefore, we obtain
\begin{equation}
    \nabla U(\alpha^k)- \nabla U(\alpha^{k-1})= \frac{Az(t)}{t^2}+ \sum_{s=1}^n \frac{1}{s!} \nabla^{s+1} U(at^{\frac{2}{p+2}}+ g(t)) \big( (b t^q+ f(t))^{\otimes s} \big)+ O \big( t^{-2-\delta} \big).
\end{equation}
Then the equation $\ddot{\alpha}^k= \nabla U(\alpha^k)$ becomes
\begin{equation} \label{eq ODE2}
    \ddot{z}(t)= \frac{Az(t)}{t^2}+ G(f(t))+ O \big( t^{-2-\delta} \big),
\end{equation}
where 
\begin{equation}
    G(f(t))= \sum_{s=1}^n \frac{1}{s!} \nabla^{s+1} U(at^{\frac{2}{p+2}}+ g(t)) \big( (b t^q+ f(t))^{\otimes s} \big)- \frac{\d^2}{\dt^2} \big( b t^q+ f(t) \big).
\end{equation}
By similar calculations, the partial derivative in $z$ of the $O(t^{-2-\delta})$ term is also $O(t^{-2-\delta})$.

\begin{lem} \label{lem claim}
If $g \in S_{\frac{p}{p+2}}$, then there exists $f \in S_{q- \frac{2-p}{p+2}}$ such that 
\begin{equation}
    G(f(t))= O \big( t^{-2-\delta} \big) \quad \text{as} \quad t \to +\infty.
\end{equation}
\end{lem}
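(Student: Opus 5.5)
We will construct $f$ as a finite sum of terms of the form $y\, t^{r}\log^{m} t$ by an inductive ``killing the leading term'' procedure. The first step is to expand $G(f)$ into a finite sum of such terms plus an $O(t^{-2-\delta})$ remainder. Since $a\in\Y$ and $g\in S_{\frac{p}{p+2}}$ with $\frac{p}{p+2}<\frac{2}{p+2}$, we can write $at^{\frac{2}{p+2}}+g(t)=t^{\frac{2}{p+2}}\big(a+t^{-\frac{2}{p+2}}g(t)\big)$, and $t^{-\frac{2}{p+2}}g(t)\to0$. Using the homogeneity of $\nabla^{s+1}U$ (degree $-p-1-s$), we get
\[
\nabla^{s+1}U\big(at^{\frac{2}{p+2}}+g\big)=t^{-\frac{2(p+1+s)}{p+2}}\,\nabla^{s+1}U\big(a+t^{-\frac{2}{p+2}}g\big).
\]
We then Taylor expand $\nabla^{s+1}U$ around $a$ to finitely many orders. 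Every product of elements of $S_\gamma$'s is again of the same form, so $G(f)$ equals an element of $S_{\gamma}$ for some $\gamma$, plus $O(t^{-2-\delta})$, provided $f\in S_{q-\frac{2-p}{p+2}}$.

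The main tool for removing a single term is the following observation. Since $\nabla^2U(at^{\frac{2}{p+2}})=A t^{-2}$ and $bt^q$ solves $\ddot y=Ay/t^2$, the linear-in-$f$ leading part of $G$ is $\frac{Af}{t^2}-\ddot f$. For a target term $h(t)=y\,t^{r-2}\log^m t$ we solve
\[
\ddot w-\frac{Aw}{t^2}=h
\]
within $S_r$. Diagonalizing $A$ via Lemma~\ref{lem linear algebra} reduces this to scalar Euler equations $\ddot w-c_iw/t^2=t^{r-2}\log^m t$. For these we make the ansatz $w=t^{r}\sum_{j\le m+2}w_j\log^j t$. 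The indicial polynomial $r(r-1)-c_i$ either does not vanish, in which case degree $m$ in $\log$ suffices, or vanishes with multiplicity at most two, in which case we need at most two extra powers of $\log$. Matching coefficients triangularly in the power of $\log$ gives a solution, so $w\in S_r$.

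The induction is as follows. Start with $f^{(0)}=0$. The terms of $G(0)$ are the part $\nabla^2U(at^{\frac{2}{p+2}}+g)(bt^q)-\frac{A bt^q}{t^2}$, which lies in $S_{q-2-\frac{2-p}{p+2}}$ since $g$ is lower order by the factor $t^{\frac{p}{p+2}-\frac{2}{p+2}}$, and the $s\ge2$ terms, which lie in $S_{2q-2-\frac{2}{p+2}}\subset S_{q-2-\frac{2-p}{p+2}}$ because $q\le\frac{p}{p+2}$. At each step we take the term of $G(f^{(j)})$ with largest exponent, say of order $t^{r-2}$ with $r>-\delta$, and set $f^{(j+1)}=f^{(j)}+w$ with $w$ as above, so that the leading term cancels. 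The new terms created by this correction are of two kinds. One comes from $g$-corrections in $\nabla^2U$, each lowering the exponent by at least $\frac{2-p}{p+2}$. The other comes from nonlinear terms involving $w$ with $bt^q$ or $f$, each lowering the exponent by at least $\frac{2}{p+2}-q\ge\frac{2-p}{p+2}$. So every generated exponent has the form
\[
q-2-\frac{2-p}{p+2}-(\text{a nonnegative combination of finitely many fixed positive gaps}).
\]
The main obstacle is termination. To handle it, we order the exponents and note that all exponents that ever appear lie in a discrete set: a finitely generated additive semigroup of gaps (coming from the exponents of $g$, of $t^{q}$, and of $t^{-\frac{2}{p+2}}$) translated by $q-2$. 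Only finitely many elements of this set lie above $-2-\delta$. Each step strictly removes the current top exponent's coefficient (including all of its log powers, handled from the highest log power down) and only adds strictly lower exponents. Hence after finitely many steps every remaining term is $O(t^{-2-\delta})$. Finally, we check that all corrections have exponent at most $q-\frac{2-p}{p+2}$, so $f\in S_{q-\frac{2-p}{p+2}}$. The truncation order $n$ in the Taylor expansions, chosen with $n>1+\frac{p}{2-p}$, makes all discarded tails $O(t^{-2-\delta})$ by the choice of $\epsilon$.
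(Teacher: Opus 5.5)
Your proposal is correct and follows essentially the paper's route. You expand via homogeneity and Taylor's formula into $S_\gamma$-type terms, cancel the top-order block exactly by solving the Euler-type equation $\ddot w - Aw/t^2 = h$ (this is Lemma~\ref{lem ODE by algebra}, which you prove by diagonalizing $A$ instead of the paper's $\ker T\oplus\operatorname{Im}T$ argument), and get termination because the top exponent strictly decreases inside a set generated by finitely many positive gaps; that set is the paper's explicit representation of $\mu_j$. One small fix: keep cancelling while the top exponent is $\ge -2-\delta$, not only while it is $> -2-\delta$, since a leftover $t^{-2-\delta}\log^m t$ with $m\ge1$ is not $O(t^{-2-\delta})$; your finiteness argument still guarantees termination.
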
 

Suppose Lemma \ref{lem claim} is correct for now. Since $\nabla_z G(f(t))=0$, we can apply Lemma \ref{lem ODE} to derive that \eqref{eq ODE2} has a solution $z(t)$, which satisfies $z(t)=o(t^{-\delta+})$ as $t \to +\infty$. Thus, we can construct $P^k$ with required properties.

Finally, we set $P_x(t)= P_x^{dN}(t)$. Then \eqref{eq asymptotic parabolic} is derived from \eqref{eq induction asymptotic hard} and \eqref{eq induction asymptotic}. To prove $x \mapsto P_x(t)$ is injective, note that if $p_k \ge 0$, then $-\delta \le -\frac{2-p}{p+2}+ p_k$, so \eqref{eq induction asymptotic hard} implies 
\begin{equation}
    \alpha^k(t)= \alpha^{k-1}(t)+ x_k b^k t^{p_k}+ o(t^{p_k}).
\end{equation}
Moreover, \eqref{eq induction asymptotic} always implies the above, so we can argue as in case 1.
\end{proof}

\begin{proof}[Proof of Lemma \ref{lem claim}]
By homogeneity of $U$, we have
\begin{equation}
    \nabla^{s+1} U(at^\frac{2}{p+2}+ g(t))= t^{-2-\frac{2(s-1)}{p+2}} \nabla^{s+1} U(a+ t^{-\frac{2}{p+2}} g(t)).
\end{equation}
Since $g \in S_{\frac{p}{p+2}}$, there exist $j \in \N$, $x^i \in \X$, $r_i \ge \frac{2-p}{p+2}>0$ and $k_i \in \N$ such that
\begin{equation}
    t^{-\frac{2}{p+2}} g(t)= \sum_{i=1}^j x^i t^{-r_i} \log^{k_i} (t).
\end{equation}
By applying the Taylor formula for multi-variable functions to $\frac{1}{s!} \nabla^{s+1} U(a+ t^{-\frac{2}{p+2}} g(t))$, with $t^{-r_i} \log^{k_i} (t)$, $1 \le i \le j$ being the variables, we get
\begin{equation}
    \frac{1}{s!} \nabla^{s+1} U(a+ t^{-\frac{2}{p+2}} g(t))= \sum_{i=1}^{j^{(s)}} A_i^{(s)} t^{-r_i^{(s)}} \log^{k_i^{(s)}} (t)+ O \big( t^{-\frac{2}{p+2}- \delta} \big) \quad \text{for } s \ge 2,
\end{equation}
where $j^{(s)} \in \N$, $r_i^{(s)} \ge 0$, $k_i^{(s)} \in \N$, $A_i^{(s)}$ is a $(1,s)$-tensor on $\X$, and
\begin{equation}
    \nabla^2 U(a+ t^{-\frac{2}{p+2}} g(t))= A+ \sum_{i=1}^{j^{(1)}} A_i^{(1)} t^{-r_i^{(1)}} \log^{k_i^{(1)}} (t)+ O \big( t^{-\frac{2}{p+2}- \delta} \big),
\end{equation}
where $j^{(1)} \in \N$, $r_i^{(1)} \ge \frac{2-p}{p+2} $, $k_i^{(1)} \in \N$, $A_i^{(1)}$ is a $(1,1)$-tensor on $\X$. Note that
\begin{equation} \label{eq r_i^s}
    r_i^{(s)}+ \frac{(s-1)(2-p)}{p+2} \ge \frac{2-p}{p+2}>0 \quad \text{for} \;\; s \ge 1,\ 1 \le i \le j^{(s)}.
\end{equation}

Combining these and $\frac{\d^2}{\dt^2} (bt^{p_k})= \frac{Ab t^{p_k}}{t^2}$, if we set
\begin{equation}
    H(f(t))= \sum_{s=1}^n t^{-2-\frac{2(s-1)}{p+2}} \sum_{i=1}^{j^{(s)}} t^{-r_i^{(s)}} \log^{k_i^{(s)}} (t) A_i^{(s)} \big( (b t^q+ f(t))^{\otimes s} \big)+ \frac{A f(t)}{t^2}- \ddot{f}(t),
\end{equation}
then 
\begin{equation} \label{eq G-H}
    G(f(t))= H(f(t))+ O \big( t^{-2-\delta} \big).
\end{equation} 
It then suffices to find $f \in S_{q- \frac{2-p}{p+2}}$ such that $H(f(t))= O \big( t^{-2-\delta} \big)$.

The following lemma is an important ingredient for us to construct $f$.

\begin{lem} \label{lem ODE by algebra}
Let $B \in \R^{dN \times dN}$ be diagonalizable over $\C$. For $\gamma \in \R$, we define
\begin{align}
    T_{\gamma} := \left\{ \sum_{i=1}^m y_i t^{\gamma} \log^{k_i} (t) \ \Big| \ m \in \N,\ y_i \in \X,\ k_i \in \N \right\}.
\end{align}
Then for any $h \in T_{\gamma}$, there exists $f \in T_{\gamma+2}$ such that
\begin{equation} \label{eq ODE by algebra}
    \ddot{f}(t)= \frac{Bf(t)}{t^2}+ h(t).
\end{equation}
\end{lem}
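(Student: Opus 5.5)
We diagonalize and reduce to a scalar problem. Since $B$ is diagonalizable over $\C$, write $B=QDQ^{-1}$ with $D=\mathrm{diag}(\mu_1,\dots,\mu_{dN})$. Set $\tilde f=Q^{-1}f$ and $\tilde h=Q^{-1}h$; these now take values in $\C^{dN}$. Then \eqref{eq ODE by algebra} decouples into the scalar equations
\begin{equation}
\ddot{\tilde f}_j(t)=\frac{\mu_j\tilde f_j(t)}{t^2}+\tilde h_j(t).
\end{equation}
Each $\tilde h_j$ is a complex combination of $t^\gamma\log^k t$ for $0\le k\le K$. If we solve each scalar equation within the complex span of $t^{\gamma+2}\log^k t$, then $f=Q\tilde f$ solves the system. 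Since $B$ and $h$ are real, $\Re f$ also solves it, and $\Re f$ lies in $T_{\gamma+2}$ (real coefficients $y_i\in\X$). So it suffices to treat a single scalar equation with $\mu\in\C$ and $h=t^\gamma\log^k t$, and then use linearity.

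For the scalar problem we use the Euler operator. Let $L=\frac{d^2}{dt^2}-\frac{\mu}{t^2}$ and $\sigma=\gamma+2$. A direct computation with $t^\sigma\log^m t=\partial_\sigma^m t^\sigma$ gives
\begin{equation}
L(t^\sigma\log^m t)=t^{\gamma}\,\partial_\sigma^m\big(P(\sigma)\big)\text{-type expansion}=t^\gamma\sum_{i=0}^m\binom{m}{i}P^{(i)}(\sigma)\log^{m-i}t,
\end{equation}
where $P(\sigma)=\sigma(\sigma-1)-\mu$. This holds because $L t^\sigma=P(\sigma)t^{\sigma-2}$; differentiating $m$ times in $\sigma$ and using Leibniz gives the formula. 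Hence $L$ maps $V_M:=\mathrm{span}\{t^\sigma\log^m t:0\le m\le M\}$ into $W_M:=\mathrm{span}\{t^\gamma\log^m t:0\le m\le M\}$, and in these bases the map is triangular.

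The only real obstacle is the resonant case. We split according to the multiplicity $r\in\{0,1,2\}$ of $\sigma$ as a root of $P$.

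If $r=0$, the matrix is triangular with diagonal entries $P(\sigma)\ne0$, so $L:V_M\to W_M$ is bijective and we solve directly.

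If $r\ge1$, the first $r$ derivatives $P,\dots,P^{(r-1)}$ vanish at $\sigma$, while $P^{(r)}(\sigma)\neq 0$. Indeed $P''=2$, so $r\le2$. Then
\begin{equation}
L(t^\sigma\log^{m}t)=t^\gamma\Big(\tbinom{m}{r}P^{(r)}(\sigma)\log^{m-r}t+\text{lower powers of }\log t\Big).
\end{equation}
So $L$ maps $V_{M+r}$ onto $W_M$: the image of $t^\sigma\log^{m+r}t$ has leading term a nonzero multiple of $t^\gamma\log^m t$. We argue by downward induction on the log power. Subtract off the top $\log$ term of $h$ using $t^\sigma\log^{m+r}t$, which leaves a remainder with strictly smaller log power, and iterate.

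In every case we obtain $f\in T_{\gamma+2}$. This is allowed because $T_{\gamma+2}$ permits arbitrary log powers $k_i\in\N$, so the extra $\log^r$ factors are harmless.
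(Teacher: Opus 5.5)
Your proof is correct. It shares the paper's underlying computation but handles the resonance differently. The paper also uses the ansatz $f=\sum_j v_j t^{\gamma+2}\log^j t$ and gets a system that is triangular in the power of $\log t$. Its diagonal operator is $T=B-(\gamma+2)(\gamma+1)$, and its off-diagonal coefficients $(j+1)(2\gamma+3)$ and $(j+2)(j+1)$ are exactly your $\binom{j+1}{1}P'(\sigma)$ and $\binom{j+2}{2}P''(\sigma)$ with $\sigma=\gamma+2$.

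The paper never diagonalizes. It stays in the real space $\X$ and splits only into the global cases $2\gamma+3\neq 0$ and $\gamma=-\frac32$. It then solves the vector recurrence downward using the single fact $\ker T\oplus\im T=\X$, which follows from $\ker T=\ker T^2$. At each step it picks a preimage whose class modulo $\im T$ makes the next equation solvable.

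You instead decouple into scalar Euler equations over $\C$ and read off the resonance per eigenvalue, as the multiplicity of $\sigma$ as a root of $P(\sigma)=\sigma(\sigma-1)-\mu_j$. You then take real parts at the end.

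What each route buys:
\begin{itemize}
\item Your route is more transparent and gives a sharper count. Extra logarithms appear only in resonant eigendirections, and two of them are needed only when $\gamma=-\frac32$ and $\mu_j=-\frac14$. The cost is the complexification and the real-part step.
\item The paper's route avoids both. It also uses only the semisimplicity of the single real eigenvalue $(\gamma+2)(\gamma+1)$ of $B$, not full diagonalizability.
\end{itemize}

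Your first displayed formula (with ``type expansion'') is informal, but the final identity $L(t^\sigma\log^m t)=t^\gamma\sum_i\binom{m}{i}P^{(i)}(\sigma)\log^{m-i}t$ is correct.
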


The proof will be given in the appendix. With this lemma, our strategy is to inductively cancel the terms in $H$ with the lowest decay.

Set $F_0(t)=0$. By looking at each term in $H(F_0(t))$, we see $H(F_0(t)) \in S_{\mu_0}$, where
\begin{equation}
    \mu_0 = \max \left\{ -2-\frac{2(s-1)}{p+2}- r_i^{(s)}+ sq \; \middle| \begin{array}{l}
        1 \le s \le n \\ 
        1 \le i \le j^{(s)}
    \end{array} \right\}.
\end{equation}
Since functions in $S_{\mu_0}$ are finitely generated, there exists $h_1 \in T_{\mu_0}$ and $\mu_0'< \mu_0$ such that 
\begin{equation}
    H(F_0(t))- h_1(t) \in S_{\mu_0'}.
\end{equation}
By Lemma \ref{lem ODE by algebra}, there exists $g_1 \in T_{\mu_0+2}$ solving
\begin{equation}
    \ddot{g}_1(t)= \frac{Ag_1(t)}{t^2}+ h_1(t).
\end{equation}

Set $F_1(t)= F_0(t)+ g_1(t)$. We then have $H(F_1(t)) \in S_{\tilde{\mu}_1}$ by looking at each term, where
\begin{equation}
    \tilde{\mu}_1 = \max \Biggl\{ -2-\frac{2(s-1)}{p+2}- r_i^{(s)}+ (s-s')q+ s'(\mu_0+2) \; \Biggm| \begin{array}{l} 
        \scriptstyle 1 \le s \le n \\ 
        \scriptstyle 1 \le i \le j^{(s)} \\ 
        \scriptstyle 0 \le s' \le s 
    \end{array} \Biggr\}.
\end{equation}
However, if $s' \ge 1$, then the quantity inside ``max'' is less than $\mu_0$, the proof of which is identical to \eqref{eq elementary estimate}; if $s'=0$ then the corresponding term is also in $H(F_0(t))$, while such terms are canceled by the choice of $g_1$. This means all the terms in $H(F_1(t))$ that belong to $T_{\mu_0}$ are canceled. Thus we actually have $H(F_1(t)) \in S_{\mu_1}$, where 
\begin{equation}
    \mu_1 = \max \Biggl\{ -2-\frac{2(s-1)}{p+2}- r_i^{(s)}+ (s-s')q+ s'(\mu_0+2) \; \Biggm| \begin{array}{l} 
        \scriptstyle 1 \le s \le n \\ 
        \scriptstyle 1 \le i \le j^{(s)} \\ 
        \scriptstyle 0 \le s' \le s 
    \end{array} \Biggr\} \setminus \left\{ \mu_0 \right\}.
\end{equation}
As before, there exist $h_2 \in T_{\mu_1}$ and $\mu_1'< \mu_1$ such that 
\begin{equation}
    H(F_1(t))- h_2(t) \in S_{\mu_1'}.
\end{equation}
By Lemma \ref{lem ODE by algebra}, there exists $g_2 \in T_{\mu_1+2}$ solving
\begin{equation}
    \ddot{g}_2(t)= \frac{Ag_2(t)}{t^2}+ h_2(t).
\end{equation}

Continuing this process, we can construct $g_1, g_2, \cdots$ with $g_{j+1} \in T_{\mu_j+2}$ such that 
\begin{equation}
    H(F_j(t)) \in S_{\mu_j}, \quad \text{for } F_j:= g_1+ \cdots+ g_j
\end{equation} 
and 
\begin{equation} \label{eq formula for mu}
    \mu_{j+1}= \max \biggl\{ -2-\frac{2(s-1)}{p+2}- r_i^{(s)}+ (s-s')q+ \sum_{v=1}^{s'} (\mu_{j_v}+2) \; \biggm| \begin{array}{l} 
        \scriptstyle 1 \le s \le n, \; 1 \le i \le j^{(s)} \\ 
        \scriptstyle 0 \le s' \le s,\; 0 \le j_1, \cdots, j_{s'} \le j
    \end{array} \biggr\} \setminus \{ \mu_0, \cdots, \mu_j \}.
\end{equation}
We prove that the sequence $\{\mu_j\}$ defined by \eqref{eq formula for mu} is strictly decreasing and tends to $-\infty$.

First, we show that $\{\mu_j\}$ is strictly decreasing. Assume that we know $\mu_0> \cdots> \mu_j$. If each $j_v \le j-1$ in \eqref{eq formula for mu}, then the quantity inside ``max'' is automatically less than or equal to $\mu_j$. If any $j_v$ equals $j$, then by \eqref{eq r_i^s}, we have 
\begin{equation} \label{eq elementary estimate} \begin{aligned}
    &-2-\frac{2(s-1)}{p+2}- r_i^{(s)}+ (s-s')q+ \sum_{v=1}^{s'} (\mu_{j_v}+2) \\
    \le & -2-\frac{2(s-1)}{p+2}- r_i^{(s)}+ (s-s')q+ (\mu_j+2)+ (s'-1)(\mu_0+2) \\
    \le &-\frac{2(s-1)}{p+2}- r_i^{(s)}+ (s-s')q+ \mu_j+ (s'-1)q \\
    = &\ \mu_j- r_i^{(s)}- (s-1) \Big( \frac{2}{p+2}-q \Big)< \mu_j.
\end{aligned} \end{equation} 
We point out that \eqref{eq elementary estimate} actually validates the formula \eqref{eq formula for mu}, as this implies all the terms in $H(F_{j+1}(t))$ that belong to $T_{\mu_j}$ are canceled. The logic is the same as how we derive $\mu_1$ from $\tilde{\mu}_1$. 

We then deduce $\mu_{j+1} \le \mu_j$. But in \eqref{eq formula for mu} we are taking ``max'' over a finite set that does not contain $\mu_j$, so $\mu_{j+1} \neq \mu_j$. We obtain $\mu_{j+1}< \mu_j$. By induction we see $\{\mu_j\}$ is strictly decreasing.

Next, we show that for each $j$, there exists $w_i^{(s)} \in \N$ and $v_1, v_2 \in \N$ such that
\begin{equation}
    \mu_j= v_1 (\mu_0+2)+ v_2 q- (v_1+v_2-1) \frac{2}{p+2}- \sum_{\substack{1 \le s \le n \\ 1 \le i \le j^{(s)}}} w_i^{(s)} r_i^{(s)}-2.
\end{equation}
By taking $v_1=1, v_2=0$ and all $w_i^{(s)}=0$, we see $\mu_0$ takes this form. For $\mu_j$ with $j>0$, we can directly see that the expression inside ``max'' in \eqref{eq formula for mu} preserves this form.

Finally, for any $C \in \R$, assume for some $v_1, v_2, w_i^{(s)} \in \N$ we have
\begin{equation}
    v_1 (\mu_0+2)+ v_2 q- (v_1+v_2-1) \frac{2}{p+2}- \sum_{\substack{1 \le s \le n \\ 1 \le i \le j^{(s)}}} w_i^{(s)} r_i^{(s)}-2 \ge C.
\end{equation}
Since $\mu_0+2< q< \frac{2}{p+2}$, there is an upper bound for $v_1$ and $v_2$. Besides, for $1 \le s \le n$, $1 \le i \le j^{(s)}$ with $r_i^{(s)}>0$, there is also an upper bound for $w_i^{(s)}$, and if $r_i^{(s)}=0$, then it does not contribute to the sum. Therefore, the set
\begin{equation}
    \Bigg\{ v_1 (\mu_0+2)+ v_2 q- (v_1+v_2-1) \frac{2}{p+2}- \sum_{\substack{1 \le s \le n \\ 1 \le i \le j^{(s)}}} w_i^{(s)} r_i^{(s)}-2 \ \Big| \ v_1, v_2,w_i^{(s)} \in \N \Bigg\} \cap [C,+\infty)
\end{equation}
is finite. Then we have $\# \{\mu_j\} \cap [C,+\infty)< \infty$, and thus $\liminf_{j \to +\infty} \mu_j= -\infty$ since $C$ is arbitrary. We conclude $\lim_{j \to +\infty} \mu_j= -\infty$ by monotonicity.

Therefore, there exists $m \in \N$ such that $\mu_m< -2-\delta$. Taking $f=F_m$, then we have $f \in S_{\mu_0+2}$ and $H(f(t))= O(t^{-2-\delta})$. By \eqref{eq r_i^s}, we have $\mu_0+2 \le q- \frac{2-p}{p+2}$, and thus $f \in S_{q- \frac{2-p}{p+2}}$. We then conclude recalling \eqref{eq G-H}.
\end{proof}

\section{Hyperbolic-parabolic solutions}

\begin{proof}[Proof of Theorem \ref{thm mixed}]\

The proof looks like a combination of that of Theorem \ref{thm hyperbolic} and \ref{thm parabolic}. 

We need an analogue of Lemma \ref{lem expansion hyperbolic}, whose proof will be more similar to that of Lemma \ref{lem claim}.

\begin{lem} \label{lem expansion mixed}
There exists $\delta>0$ and $f \in S_{1-p}$ such that for $\tilde{\alpha}(t)= vt+ \alpha^0(t)+ f(t)$, we have 
\begin{equation}
    \ddot{\tilde{\alpha}}- \nabla U(\tilde{\alpha})= O(t^{-2-\delta}) \quad \text{as} \quad t \to +\infty.
\end{equation}
\end{lem}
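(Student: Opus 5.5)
The plan is to run the inductive cancellation scheme from the proof of Lemma \ref{lem claim}, after separating the intra-cluster and inter-cluster parts of the force. Write $U=U_{\mathrm{in}}+U_{\mathrm{out}}$, where $U_{\mathrm{in}}$ sums over pairs $i\sim j$ and $U_{\mathrm{out}}$ over pairs in different clusters. Assumption (1) says exactly that $\ddot{\alpha}^0=\nabla U_{\mathrm{in}}(\alpha^0)$, and $\ddot{(vt)}=0$. Hence for $\tilde\alpha=vt+\alpha^0+f$ we need to solve, up to $O(t^{-2-\delta})$,
\begin{equation}
\ddot f=\big[\nabla U_{\mathrm{in}}(\alpha^0+f)-\nabla U_{\mathrm{in}}(\alpha^0)\big]+\nabla U_{\mathrm{out}}(vt+\alpha^0+f),
\end{equation}
where we use that $U_{\mathrm{in}}$ is invariant under adding $vt$, since $v$ is constant on clusters.

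\textbf{Step 1: replace $\alpha^0$ by $at^{2/(p+2)}$ at admissible cost.} The function $\alpha^0$ is not in any $S_\gamma$, so we cannot feed it directly into the algebraic scheme.
\begin{itemize}
\item \emph{Inter-cluster term.} For $i\not\sim j$ we have $b^0_{ij}=v_i-v_j\neq0$. Expanding $\nabla U_{\mathrm{out}}$ around $vt$ by Taylor gives a leading term of order $t^{-1-p}$. A term involving $s$ factors of $\alpha^0+f$ has size about $t^{-1-p-s}|\alpha^0+f|^s$.
\item \emph{Cost of the replacement there.} Substituting $\alpha^0=at^{2/(p+2)}+O(t^q)$ costs $O(t^{-2-p+q})$ from the linear term. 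This is $O(t^{-2-\delta})$ because $q<\frac{2}{p+2}+p-1<p$. Higher-order terms cost less.
\item \emph{Intra-cluster term.} By homogeneity, $\nabla^{s+1}U_{\mathrm{in}}(\alpha^0)=t^{-2-\frac{2(s-1)}{p+2}}\nabla^{s+1}U_{\mathrm{in}}(a+O(t^{q-\frac{2}{p+2}}))$. This requires $a_i\neq a_j$ for $i\sim j$.
\item \emph{Cost of the replacement there.} Replacing by $a$ in the linear term against $f=O(t^{1-p})$ costs $O(t^{-2+q-\frac{2}{p+2}+1-p})$. This is $O(t^{-2-\delta})$ precisely because $q<\frac{2}{p+2}+p-1$, which is where that hypothesis enters. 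Higher $s$ gives smaller errors.
\end{itemize}
After Step 1, the equation becomes $\ddot f=\frac{Bf}{t^2}+H(f)+O(t^{-2-\delta})$. Here $B$ is the block matrix of $\nabla^2U_{\mathrm{in}}(a)$, with zero blocks for singleton clusters. $B$ is $\lambda$-self-adjoint and hence diagonalizable over $\R$, exactly as in the first part of the proof of Lemma \ref{lem linear algebra}. The term $H(f)$ is a finite sum of coefficient tensors times powers $t^{\mu}\log^k t$ applied to tensor powers of $f$. The truncation order $n$ is chosen large enough that the Taylor remainders are $O(t^{-2-\delta})$.

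\textbf{Step 2: iterative cancellation.} Start from $F_0=0$. Then $H(F_0)\in S_{-1-p}$, with leading term $-\sum_{j\not\sim i}\lambda_j b^0_{ij}|b^0_{ij}|^{-p-2}t^{-1-p}$. At each stage, take the top-order piece $h\in T_{\mu_j}$ of the residual and solve $\ddot g=\frac{Bg}{t^2}+h$ with $g\in T_{\mu_j+2}$ using Lemma \ref{lem ODE by algebra}. Then set $F_{j+1}=F_j+g$. The first correction lies in $T_{1-p}$, and all later ones lie in lower $T$'s, so $f\in S_{1-p}$.

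\textbf{Step 3 (main obstacle): the exponents decrease strictly and tend to $-\infty$.} As in \eqref{eq elementary estimate}, every exponent produced has the form
\begin{equation}
-2-\delta_0+v_1(\text{gain from }f)+\sum(\text{nonnegative combinations of the basic decrements}),
\end{equation}
where the basic decrements are $p$, $1-\frac{2}{p+2}$, $\frac{2}{p+2}-q$ and similar quantities.

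The delicate point is that inserting an $f$-term of order $t^{\mu_j+2}$ into $H$ must strictly lower the exponent. There are two cases:
\begin{itemize}
\item In the inter-cluster part, each factor of $f$ comes with an extra $t^{-1}$ and $\mu_j+2\le 1-p<1$, so the exponent drops.
\item In the intra-cluster part, for $s\ge2$ each extra factor comes with $t^{-\frac{2}{p+2}}$ against $f=O(t^{1-p})$. This needs $1-p<\frac{2}{p+2}$. If that fails (small $p$), I would instead use the full exponent bookkeeping from Lemma \ref{lem claim}: show that the exponents form a set that is finitely generated by positive decrements above any threshold $C$. Then the strict-decrease argument via ``$\max$ over a finite set minus the already-cancelled exponents'' still applies.
\end{itemize}
Once $\mu_m<-2-\delta$, set $f=F_m$; this proves the lemma. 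The main obstacle is checking strict decrease in the small-$p$ regime, and I expect the needed estimate to come from combining the $t^{-1}$ factor in each inter-cluster Taylor term with $\frac{2}{p+2}<1$.
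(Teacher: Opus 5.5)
Your outline follows the paper's proof. You split $U$ into intra- and inter-cluster parts (the paper's $U_I$, $U_{IJ}$) and use the cluster equations to cancel $\ddot\alpha^0$. You then Taylor-expand and replace $\alpha^0$ by $at^{2/(p+2)}$, at a cost controlled by $q<\frac{2}{p+2}+p-1<p$. Finally you cancel the top-order terms one at a time with Lemma \ref{lem ODE by algebra} for $A=\sum_I\nabla^2U_I(a)$.

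The one real defect is in Step 3, where you leave a ``small-$p$ regime'' with $1-p\ge\frac{2}{p+2}$ unresolved. That regime is empty:
\[
\frac{2}{p+2}-(1-p)=\frac{p(p+1)}{p+2}>0 \quad \text{for every } p>0 .
\]
This is exactly the positive quantity $\frac{2}{p+2}+p-1$ that the paper uses throughout. With it, both halves of Step 3 go through:
\begin{itemize}
\item Consider an $s$-linear intra-cluster term, $s\ge2$, with one factor of order $t^{\mu_j+2}$ and the others of order at most $t^{1-p}$. Its exponent is at most $\mu_j-(s-1)\bigl(\frac{2}{p+2}+p-1\bigr)<\mu_j$, which gives strict decrease.
\item In the representation of the exponents, each unit of a generator contributes $(2-p_k)-\frac{2}{p+2}\le(1-p)-\frac{2}{p+2}<0$. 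So only finitely many exponents lie above any threshold, and $\mu_j\to-\infty$.
\end{itemize}

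Do not rely on the fallback you sketch. If $1-p\ge\frac{2}{p+2}$ actually held, no exponent bookkeeping would rescue the argument:
\begin{itemize}
\item A correction $f\sim t^{1-p}$ would not be small compared with the intra-cluster separations $\sim t^{2/(p+2)}$, so Taylor-expanding $\nabla U_{\mathrm{in}}$ about $\alpha^0$ would not be legitimate.
\item No finite truncation order would make that remainder $O(t^{-2-\delta})$. The paper picks $n$ with $n\bigl(\frac{2}{p+2}+p-1\bigr)>1$.
\item The set of exponents would not be finite above thresholds.
\end{itemize}
Your own statements ``higher $s$ gives smaller errors'' and ``$n$ is chosen large enough'' already use this inequality silently. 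The mechanism you guess at the end, the extra $t^{-1}$ in inter-cluster terms, plays no role in the intra-cluster terms. Adding the identity above closes the proof.
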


\begin{proof}
For simplicity, we set $M=M_v$. For $I \neq J \in M$ and $\alpha \in \Y$, we set
\begin{equation}
    U_I(\alpha)= \frac{1}{p} \sum_{i<j \in I} \frac{\lambda_i \lambda_j}{|\alpha_i- \alpha_j|^p} \quad \text{and} \quad U_{IJ}(\alpha)= \frac{1}{p} \sum_{i \in I, j \in J} \frac{\lambda_i \lambda_j}{|\alpha_i- \alpha_j|^p}.
\end{equation}
Then we have
\begin{equation}
    U(\alpha)= \sum_{I \in M} U_I(\alpha)+ \frac{1}{2} \sum_{I \neq J \in M} U_{IJ}(\alpha).
\end{equation}

Since $\frac{2}{p+2}+ p-1>0$, there exists $n \in \N$ such that $n(\frac{2}{p+2}+ p-1)>1$. Take $\delta$ such that
\begin{equation}
    \delta>0, \quad \delta< \Big( \frac{2}{p+2}+p-1 \Big)n+ p-1 \quad \text{and} \quad \delta< \frac{2}{p+2}+p-1-q.
\end{equation}
Let $f \in S_{1-p}$, to be determined later. Because $\frac{2}{p+2}+ p-1>0$, $f(t)$ is of lower order than $vt+ \alpha^0(t)$, so $\tilde{\alpha} \in \Y$. This means we may apply the Taylor formula to $\nabla U(\tilde{\alpha})$. Also, the leading term in $\tilde{\alpha}$ is fixed, so we have uniform estimates for the remainder. We have
\begin{align}
    \nabla U_I(\tilde{\alpha})- \nabla U_I (\alpha^0) &= \sum_{s=1}^n \frac{1}{s!} \nabla^{s+1} U_I (\alpha^0) \big( f(t)^{\otimes s} \big)+ O \big( t^{-2-\frac{2n}{p+2}+ (n+1)(1-p)+} \big) \\
    &= \sum_{s=1}^n \frac{1}{s!} t^{-2- \frac{2(s-1)}{p+2}} \nabla^{s+1} U_I(a) \big( f(t)^{\otimes s} \big)+ O \big( t^{-2-\delta} \big)
\end{align}
by \eqref{eq parabolic assumption}, and
\begin{align}
    &\quad \ \nabla U_{IJ}(\tilde{\alpha})- \nabla U_{IJ} \big( vt+ at^\frac{2}{p+2} \big) \\
    &= \sum_{s=1}^n \frac{1}{s!} \nabla^{s+1} U_{IJ} \big( vt+ at^{\frac{2}{p+2}} \big) \big( (f(t)+ O(t^q))^{\otimes s} \big)+ O \big( t^{-p-n-2+ (n+1)(1-p)+} \big) \\
    &= \sum_{s=1}^n \frac{1}{s!} t^{-1-p-s} \nabla^{s+1} U_{IJ} \big( v+ at^{-\frac{p}{p+2}} \big) \big( f(t)^{\otimes s} \big)+ O \big( t^{-2-\delta} \big).
\end{align}
Using Taylor's formula again, there exist $l \in \N$ and $p_k \ge p+1$, $y^k \in \X$ for $1 \le k \le l$ such that
\begin{equation}
    \frac{1}{2} \sum_{I \neq J \in M} \nabla U_{IJ} \big( vt+ at^\frac{2}{p+2} \big)= \sum_{k=1}^l t^{-p_k} y^k+ O \big( t^{-2- \delta} \big).
\end{equation}
Moreover, there exist $j^{(s)} \in \N$, $\bar{r}_i^{(s)} \ge 0$ and $(1,s)$-tensors $A_i^{(s)}$ on $\X$ such that
\begin{equation}
    \frac{1}{2 s!} \sum_{I \neq J \in M} \nabla^{s+1} U_{IJ} \big( v+ at^{-\frac{p}{p+2}} \big)= \sum_{i=1}^{j^{(s)}} A_i^{(s)} t^{-\bar{r}_i^{(s)}}+ O \big( t^{-1- \delta} \big) \quad \text{for } s \ge 1.
\end{equation}

We take
\begin{align}
    &r_i^{(s)}= \bar{r}_i^{(s)}+ \frac{p(s-1)}{p+2}+p \quad \text{for} \;\; s \ge 1,\ 1 \le i \le j^{(s)}, \\
    &r_0^{(1)}=\frac{2}{p+2}+p-1, \quad A_0^{(1)}=0, && \\
    &r_0^{(s)}=0, \quad A_0^{(s)}= \frac{1}{s!} \sum_{I \in M} \nabla^{s+1} U_I(a) \quad \text{for} \;\; s \ge 2, \\
    &A= \sum_{I \in M} \nabla^2 U_I(a) \in \R^{dN \times dN}. 
\end{align}
Using \eqref{eq sub N-body} and previous expansions, we get
\begin{equation}
    \nabla U(\tilde{\alpha})- \ddot{\tilde{\alpha}}(t) = H(f(t))+ O \big( t^{-2- \delta} \big), 
\end{equation}
where
\begin{equation}
    H(f(t))= \sum_{k=1}^l t^{-p_k} y^k+ \sum_{s=1}^n t^{-2-\frac{2(s-1)}{p+2}} \sum_{i=0}^{j^{(s)}} t^{-r_i^{(s)}} A_i^{(s)} \big( f(t)^{\otimes s} \big)+ \frac{Af(t)}{t^2}- \ddot{f}(t),
\end{equation}
with 
\begin{equation} 
    r_i^{(s)}+ (s-1) \Big( \frac{2}{p+2}+p-1 \Big) \ge \frac{2}{p+2}+p-1 \quad \text{for} \;\; s \ge 1,\ 0 \le i \le j^{(s)}
\end{equation}
as the replacement of \eqref{eq r_i^s}. It suffices to find $f \in S_{1-p}$ such that $H(f(t))= O \big( t^{-2-\delta} \big)$. 

The rest of the proof is almost identical to that of Lemma \ref{lem claim}, so we only give a sketch. First, since $\Lambda A$ is symmetric, using the proof of Lemma \ref{lem linear algebra}, we see $A$ is diagonalizable over $\R$.

Take $F_0(t)=0$. Then we have $H(F_0(t)) \in S_{\mu_0}$, where 
\begin{equation}
    \mu_0= \max \{-p_1, \cdots, -p_l\} \le -1-p.
\end{equation} 
Then there exist $h_1 \in T_{\mu_0}$ and $\mu_0'< \mu_0$ such that 
\begin{equation}
    H(F_0(t))- h_1(t) \in S_{\mu_0'}.
\end{equation}
By Lemma \ref{lem ODE by algebra}, there exists $g_1 \in T_{\mu_0+2}$ solving
\begin{equation}
    \ddot{g}_1(t)= \frac{Ag_1(t)}{t^2}+ h_1(t).
\end{equation} 

Repeating this process as in the proof of Lemma \ref{lem claim}, we find $g_1, g_2, \cdots$ with $g_{j+1} \in T_{\mu_j+2}$ such that
\begin{equation}
    H(F_j(t)) \in S_{\mu_j}, \quad \text{for } F_j:= g_1+ \cdots + g_j
\end{equation} 
and 
\begin{equation}
    \mu_{j+1}= \max \biggl\{ -2-\frac{2(s-1)}{p+2}- r_i^{(s)}+ \sum_{v=1}^s (\mu_{j_v}+2) \biggm| \begin{array}{l}
         \scriptstyle 1 \le s \le n \\ 
         \scriptstyle 0 \le i \le j^{(s)} \\ 
         \scriptstyle 0 \le j_1, \cdots, j_s \le j 
    \end{array} \biggr\} \cup \{-p_1, \cdots, -p_l\} \setminus \left\{ \mu_0, \cdots, \mu_j \right\}.
\end{equation}
Again, we will prove that $\{\mu_j\}$ is strictly decreasing and tends to $-\infty$.

To show strict monotonicity, the critical step is an analogue of \eqref{eq elementary estimate}: if any of $j_v=j$, then
\begin{equation} \begin{aligned}
    &-2-\frac{2(s-1)}{p+2}- r_i^{(s)}+ \sum_{v=1}^s (\mu_{j_v}+2) \\
    \le & -2-\frac{2(s-1)}{p+2}- r_i^{(s)}+ (\mu_j+2)+ (s-1)(\mu_0+2) \\
    \le & -(s-1)\Big( \frac{2}{p+2}+p-1 \Big)- r_i^{(s)}+ \mu_j < \mu_j.
\end{aligned} \end{equation}
Then we can prove by induction that for each $j$, there exist $v_k, w_i^{(s)} \in \N$ such that
\begin{equation}
    \mu_j= \sum_{k=1}^l v_k (2-p_k)- \Big( \sum_{k=1}^l v_k-1 \Big) \frac{2}{p+2}- \sum_{\substack{1 \le s \le n \\ 0 \le i \le j^{(s)}}} w_i^{(s)} r_i^{(s)}-2.
\end{equation}
Since $2-p_k \le 1-p< \frac{2}{p+2}$, the above then implies $\# \{\mu_j\} \cap [C,+\infty)< \infty$ for any $C \in \R$, and finally we deduce $\mu_j \to -\infty$ as $j \to +\infty$.

We then conclude that there exists $m \in \N$ such that $\mu_m< -2-\delta$. Take $f=F_m$. Then we have $f \in S_{\mu_0+2} \subset S_{1-p}$ and $H(f(t))= O \big( t^{-2-\delta} \big)$ as $t \to +\infty$. 
\end{proof}

Go back to the proof of Theorem \ref{thm mixed}. Let $\tilde{P}= (\tilde{\alpha}, \tilde{\beta})$, where $\tilde{\beta}= \dot{\tilde{\alpha}}$ and $\tilde{\alpha}$ is as in Lemma \ref{lem expansion mixed}. 

Let $\epsilon \in (0, \delta)$ and $\alpha(t)= \tilde{\alpha}(t)+ z(t)$, where we assume $|z(t)| \le t^{-\epsilon}$. Then we have
\begin{equation}
    \nabla U(\alpha)- \nabla U(\tilde{\alpha})= \nabla^2 U(\tilde{\alpha}) z(t)+ O \big( t^{-2- \frac{2}{p+2}-2\epsilon} \big).
\end{equation}
Since $\tilde{\alpha}= vt+ at^\frac{2}{p+2}+ O(t^{\max\{q,1-p\}})$, we have
\begin{align}
    \nabla^2 U(\tilde{\alpha}) &= \sum_{I \in M} \nabla^2 U_I(\tilde{\alpha})+ O(t^{-2-p}) \\
    &= t^{-2} \sum_{I \in M} \nabla^2 U_I(a)+ O \big( t^{-2- \min\{p, \frac{2}{p+2}- q, \frac{2}{p+2}+ p-1\}} \big).
\end{align}
Take $\delta$ smaller if necessary (Lemma \ref{lem expansion mixed} will still be valid) and set 
\begin{equation}
    A= \sum_{I \in M} \nabla^2 U_I(a).
\end{equation}
Then we have
\begin{equation}
    \nabla U(\alpha)- \nabla U(\tilde{\alpha})= \frac{Az(t)}{t^2}+ O(t^{-2-\delta}),
\end{equation}
and the partial derivative in $z$ of the $O(t^{-2-\delta})$ term is also $O(t^{-2-\delta})$.

By Lemma \ref{lem expansion mixed}, the equation \eqref{eq N-body tight form} becomes \eqref{eq ODE with decay}, where $F$ satisfies \eqref{eq estimate of F} with $\kappa= \delta$. Since $\Lambda A$ is symmetric, $A$ is diagonalizable, so we can apply Lemma \ref{lem ODE} to find a solution of \eqref{eq N-body} satisfying \eqref{eq asymptotic mixed}.

The expansion for the special case $p>1$ can be deduced immediately.
\end{proof}

\appendix

\section{Proof of Lemma \ref{lem ODE}}

We may work instead on $\C$ by setting $F(t,z)= F(t, \mathrm{Re}(z))$ for $z \in \C \otimes \X \simeq \C^{dN}$ and allowing $z(t)$ to take values in $\C^{dN}$. If the complex counterpart is proved, then the lemma follows by taking the real part. We may also assume $\epsilon>0$ and $\epsilon> \delta- \kappa$ by increasing $\epsilon$.

Since $B$ is diagonalizable over $\C$, let $v_1, \cdots, v_{dN} \in \C^{dN}$ be eigenvectors of $B$ forming a basis of $\C^{dN}$, with eigenvalues $c_1, \cdots, c_{dN} \in \C$, respectively. Let $a_j, b_j$ be the two roots of $\lambda^2-\lambda= c_j$. Write $F(t,z)= \sum_{j=1}^{dN} f_j(t,z) v_j$. Then $f_j$ also satisfies \eqref{eq estimate of F}. 

For $T>1$, define $\B= \left\{ z \in C \big( [T,+\infty), \C^{dN} \big) \ \big| \ {\| z \|}_3 \le 1 \right\}$, where
\begin{equation}
    {\| z \|}_3 := \sup \limits_{t \ge T} t^{\epsilon} |z(t)|.
\end{equation}
For $z \in \B$, we define $\Gamma z$ by
\begin{equation}
    (\Gamma z)(t)= \sum_{j=1}^{dN} G_{a_j,b_j}f_j(t,z) v_j,
\end{equation}
where 
\begin{equation}
    G_{a,b}f(t,z)= \left\{ \begin{aligned}
        &\ \frac{G_a f(t,z)- G_b f(t,z)}{a-b}, && a \neq b, \\
        &\ \frac{\d}{\d a} G_a f(t,z), && a=b,
    \end{aligned} \right.
\end{equation}
and
\begin{equation}
    G_a f(t,z)= \left\{ \begin{aligned}
        &\ t^a \int_T^t \tau^{1-a} f(\tau,z(\tau)) \d \tau, && \mathrm{Re}(a) \le -\delta, \\
        &-t^a \int_t^\infty \tau^{1-a} f(\tau,z(\tau)) \d \tau, && \mathrm{Re}(a) > -\delta,
    \end{aligned} \right. 
\end{equation}
for any $a,b \in \mathbb{C}$, function $f(\cdot,\cdot)$ satisfying \eqref{eq estimate of F} and $z \in \B$. 

Note that, if $f$ satisfies \eqref{eq estimate of F}, then for any $x,y \in \B$, we have
\begin{equation}
    |G_a f(t,x)| \lesssim t^{\Re(a)} \int_T^t \tau^{1-\Re(a)} \tau^{-2-\delta} \d \tau \lesssim t^{-\delta} \log t, \quad \text{if } \ \Re(a) \le -\delta,
\end{equation}
\begin{equation}
    |G_a f(t,x)| \lesssim t^{\Re(a)} \int_t^\infty \tau^{1-\Re(a)} \tau^{-2-\delta} \d \tau \lesssim t^{-\delta}, \quad \text{if } \ \Re(a)> -\delta,
\end{equation}
and, using $\kappa+ \epsilon> \delta$, 
\begin{align}
    |G_a f(t,x)- G_a f(t,y)| &\lesssim t^{\Re(a)} \int_T^t \tau^{1-\Re(a)} \tau^{-2-\kappa} |x(\tau)- y(\tau)| \d \tau \\
    &\lesssim t^{\Re(a)} \int_T^t \tau^{-1-\Re(a)- \kappa- \epsilon} {\| x-y \|}_3 \d \tau \lesssim t^{-\delta} {\| x-y \|}_3, \;\; \text{if} \; \Re(a) \le -\delta,
\end{align}
\begin{align}
    |G_a f(t,x)- G_a f(t,y)| \lesssim t^{\Re(a)} \int_t^\infty \tau^{-1-\Re(a)- \kappa- \epsilon} {\| x-y \|}_3 \d \tau \lesssim t^{-\delta} {\| x-y \|}_3, \;\; \text{if} \; \Re(a)> -\delta.
\end{align}
The same results hold for $\frac{\d}{\d a} G_a f$ with the right hand sides multiplied by $\log t$. 

We then deduce that
\begin{equation}
    {\| \Gamma x \|}_3 \lesssim T^{\epsilon- \delta} \log^2 (T) \quad \text{and} \quad {\| \Gamma x- \Gamma y \|}_3 \le T^{\epsilon- \delta} \log T {\| x-y \|}_3, \quad \forall x,y \in \B.
\end{equation}
Therefore, if $T$ is large enough, then $\Gamma$ maps $\B$ to itself and is a contraction. We can directly check that for $z \in \B$, one has
\begin{equation}
    \frac{\d^2}{\dt^2} (\Gamma z)= \frac{B}{t^2} (\Gamma z)+ F(t,z).
\end{equation}
Thus the fixed point of $\Gamma$ in $\B$, guaranteed by the contraction mapping theorem, solves \eqref{eq ODE with decay}. Moreover, note that $z \in \B$ implies $\Gamma z= o(t^{-\delta+})$, so we have the desired decay.

\begin{rmk}
If $B$ is not diagonalizable over $\C$, then the conclusion is still correct. One can prove this using the fact that the set of diagonalizable matrices over $\C$ is dense and modifying the definition of $\Gamma$ slightly. However, such result is not needed in the paper.
\end{rmk}

\section{Proof of Lemma \ref{lem ODE by algebra}}

By linearity, it suffices to consider the case where $h$ consists of one term. We assume $h(t)= w t^{\gamma} \log^m (t)$ for some $w \in \X$ and $m \in \N$. 

First consider the case $\gamma \neq -\frac{3}{2}$. We take
\begin{equation}
    f(t)= \sum_{j=0}^{m+1} v_j t^{\gamma+2} \log^j (t), \quad v_j \in \X.
\end{equation}
Setting $v_{m+2}= v_{m+3}=0$, then we have
\begin{align}
    \ddot{f}(t) &= \sum_{j=0}^{m+1} v_j \Big( (\gamma+2)(\gamma+1) t^\gamma \log^j(t)+ j(2\gamma+3) t^\gamma \log^{j-1}(t)+ j(j-1) t^\gamma \log^{j-2}(t) \Big) \\
    &= \sum_{j=0}^{m+1} \Big( (\gamma+2)(\gamma+1) v_j+ (j+1)(2\gamma+3) v_{j+1}+ (j+2)(j+1) v_{j+2} \Big) t^\gamma \log^j(t).
\end{align}
Thus, $f$ satisfies \eqref{eq ODE by algebra} if for $j=0, \cdots, m+1$, it holds
\begin{equation} 
    Bv_j= (\gamma+2)(\gamma+1) v_j+ (j+1)(2\gamma+3) v_{j+1}+ (j+2)(j+1) v_{j+2}- \delta_{jm} w.
\end{equation}
where $\delta_{jm}$ is the Kronecker delta. Let $T= B- (\gamma+2)(\gamma+1)$. Then we require
\begin{equation} \label{eq recurrence}
    Tv_j= (j+1)(2\gamma+3) v_{j+1}+ (j+2)(j+1) v_{j+2}- \delta_{jm} w.
\end{equation} 
\\[-8pt]
\textbf{Claim:} for any $v' \in \im T$ and $w' \in \X$, there is $v \in \X$ such that $Tv=v'$ and $v-w' \in \im T$.

In fact, since $v' \in \im T$, there exists $w'' \in \X$ such that $Tw''=v'$. Since $B$ is diagonalizable over $\C$, $T$ is also diagonalizable over $\C$. From linear algebra we know $\ker T= \ker T^2$, which, by the rank-nullity theorem, implies $\ker T \cap \im T= \{0\}$ and thus $\ker T+ \im T= \X$. Then there exists $v'' \in \ker T$ such that $w'- w''- v'' \in \im T$. By choosing $v=w''+ v''$, we get both $Tv=v'$ and $v-w' \in \im T$. \\[-8pt]

Note that $2\gamma+3 \neq 0$. We first apply the claim to $v'=0$ and $w'= \frac{w}{(m+1)(2\gamma+3)}$. There exists $v_{m+1} \in \X$ such that \eqref{eq recurrence} holds for $j=m+1$ and for $j=m$ the equation of $v_m$ is solvable. Then we apply the claim to $v'= (m+1)(2\gamma+3) v_{m+1}-w$ and $w'=-\frac{m+1}{2\gamma+3} v_{m+1}$. We find $v_m$ such that \eqref{eq recurrence} holds for $j=m$ and for $j=m-1$ the equation of $v_{m-1}$ is solvable. Continuing this process, we can find $v_0, \cdots, v_{m+1}$ such that \eqref{eq recurrence} holds for all $j$.

For the case $\gamma= -\frac{3}{2}$, we take $f(t)= \sum \limits_{j=0}^{m+2} v_j t^{\gamma+2} \log^j (t), \ v_j \in \X$ and set $v_{m+3}= v_{m+4}=0$. Then we need that for $j=0, \cdots, m+2$, it holds
\begin{equation}
    Tv_j= (j+2)(j+1)v_{j+2}- \delta_{jm}w.
\end{equation}
Similar to above, we apply the claim to $v'=0$ and $w'= \frac{w}{(m+2)(m+1)}$ to get $v_{m+2}$. Then we apply the claim to $v'=  (m+2)(m+1) v_{m+2}$ and $w'=0$ to solve for $v_m$. We can continue this process until we reach $0$ or $1$, whichever has the same parity as $m$. For $j$ such that $j-m$ is odd, we can simply take $v_j=0$.

In conclusion, we have constructed $f$ for all $\gamma \in \R$ and $h \in T_{\gamma}$.

\bibliographystyle{amsplain}
\bibliography{ref}

\end{document}